\tikzset{/tikz/notestyleraw/.append style={text=black}}
\newtheorem{thm}{Theorem}[section]
\newtheorem{lem}[thm]{Lemma}
\newtheorem{defn}[thm]{Definition}
\newtheorem{prop}[thm]{Proposition}
\newtheorem{rmk}[thm]{Remark}
\newcommand{\be}{\begin{eqnarray}}
\newcommand{\ee}{\end{eqnarray}}
\newcommand{\ben}{\begin{eqnarray*}}
\newcommand{\een}{\end{eqnarray*}}
\newcommand{\beq}{\begin{equation}}
\newcommand{\eeq}{\end{equation}}
\newcommand{\beal}{\begin{aligned}}
\newcommand{\enal}{\end{aligned}}
\newcommand{\eps}{\varepsilon}
\newcommand{\lb}{\lambda}
\newcommand{\T}{\mathbb{T}}
\newcommand{\R}{\mathbb{R}}
\newcommand{\N}{\mathbb{N}}
\newcommand{\Z}{\mathbb{Z}}
\newcommand{\om}{\omega}
\newcommand{\dt}{\delta}
\newcommand{\Dt}{\Delta}
\newcommand{\cS}{\mathcal{S}}
\newcommand{\cM}{\mathcal{M}}
\newcommand{\cP}{\mathcal{P}}
\newcommand{\cL}{\mathcal{L}}
\newcommand{\ol}{\overline }
\title[Limit of solutions for semilinear H-J equations with degenerate viscosity]{Limit of solutions for semilinear Hamilton-Jacobi equations with degenerate viscosity}
\subjclass[2010]{35B40,\ 37J50,\ 49L25}
\keywords{Degenerate viscous Hamilton-Jacobi equations, stochastic Mather measures, Ergodic constant, Nonlinear adjoint methods}
\thanks{*\,The datasets analysed during the current study are available from the corresponding author on reasonable request}
\date{}
\begin{document}
\maketitle

\centerline{\scshape Jianlu Zhang*}
\medskip
{\footnotesize
\centerline{Hua Loo-Keng Key Laboratory of Mathematics \&}
 \centerline{Mathematics Institute, Academy of Mathematics and systems science}
 \centerline{Chinese Academy of Sciences, Beijing 100190, China}
  \centerline{{\it Email: }jellychung1987@gmail.com}  
}
\bigskip

\begin{abstract}
In the paper we prove the convergence of viscosity solutions $u_{\lb}$ as $\lb\rightarrow0_+$ for the parametrized degenerate viscous Hamilton-Jacobi equation
\[
H(x,d_x u, \lb u)=\alpha(x)\Dt u,\quad \alpha(x)\geq 0,\quad x\in \T^n
\]
under suitable convex and monotonic conditions on $H:  T^*M\times\R\rightarrow\R$. Such a limit can be characterized in terms of stochastic Mather measures associated with the critical equation 
\[
H(x,d_x u,0)=\alpha(x)\Dt u.
\]


\end{abstract}

\bigskip

\section{Introduction}

Let $\T^n:=\R^n\slash 2\pi\Z^n$ be the $n-$dimensional torus equipped with the Euclid metric $|\cdot|$. The {\it Hamiltonian} $H : T^*\T^n \times\R \rightarrow \R$ is a $C^{2}-$function which satisfies the following assumptions:

\begin{itemize}
\item[(H1)] For any $(x,u)\in\T^n\times\R$, $H(x,\cdot,u)$  is strictly convex with respect to  $p\in T_x^*\T^n$;
\item[(H2)] There exists constants $m>1$ and $K_m, M_m>0$ such that 
\[
H(x,p,0)\geq K_m|p|^m-M_m,\quad\forall (x,p)\in T^*\T^n;
\]
\item[(H3)] There exist $0<\rho_*\leq\rho^*\in\R$ such that for any $(x,p)\in T^*\T^n$ and $u_1\leq u_2$, 
\[
\rho_*(u_2-u_1)\leq H(x,p,u_2)-H(x,p,u_1)\leq \rho^*(u_2-u_1);
\]
\item[(H4)] For any $(x,p),(y,p)\in T^*\T^n$ and $|u|\leq R$,  there exist constants $\kappa(R),\varsigma(R)>0$ such that
\[
|H(x,p,u)-H(y,p,u)|\leq \kappa(R)\Big(H(x,p,0)+\varsigma(R)\Big)|x-y|;
\]
\item[(H5)] For any $x\in\T^n$, $p,p'\in T_x^*\T^n$ with $|p'|\leq 2|p|$ and $|u|\leq R$, there exist  constants $\xi(R),\eta(R)>0$ such that
\[
|H(x,p,u)-H(x,p',u)|\leq\xi(R)\Big(H(x,p,u)+\eta(R)\Big)\frac{ |p-p'|}{|p|+1}.
\]
\end{itemize}


Following previous assumptions {\rm (H1)-(H5)}, in this paper we study the asymptotic limit of the viscosity solution $u_\lb$ as $\lb\rightarrow 0_+$ for the following semilinear Hamilton-Jacobi equations with a degenerate diffusion:
\beq\label{eq:hj-e}\tag{HJ$_{e}^\lb$}
H(x,d_xu_\lb,\lb u_\lb)=\alpha(x)\Delta u_\lb+c(H),\quad x\in \T^n
\eeq
where $C^2(\T^n,\R)\ni \alpha(x)\geq0$ and the {\it ergodic constant} $c(H)\in\R$ is suitably chosen such that the critical equation 
\beq\label{eq:hj-e-cri}\tag{HJ$_{e}^{0}$}
H(x,d_xu,0)=\alpha(x)\Delta u+c(H),\quad x\in \T^n
\eeq
is solvable. Notice that  $u_\lb$ is unique due to a {\it comparison principle} (see \cite{CIL} for instance), and the uniqueness of $c(H)$ has been established in \cite{MT,IMT1,IMT2}. However, the degeneracy of $\alpha(x)$ disable the uniqueness of solutions for \eqref{eq:hj-e-cri}, even up to an additive constant. So the convergence of $u_\lb$ as $\lb\rightarrow 0_+$ is uncertain and need to be proved.

In this paper, we verify the convergence of $u_\lb$ by presenting the following conclusion. Without loss of generality, we assume $ c(H) = 0$ henceforth.
\begin{thm}\label{thm:1}
Under the assumptions {\rm (H1)-(H5)}, the viscosity solution $u_\lb$ of \eqref{eq:hj-e} converges to a uniquely identified solution $u_0$ of \eqref{eq:hj-e-cri} as $\lb\rightarrow 0_+$, which 
can be expressed by 
\[
u_0(x)=\sup_{\om\in\cS'} \om(x),\quad x\in \T^n
\]
with $\cS'$ denoted by the set of viscosity solutions $\om$ of \eqref{eq:hj-e-cri} satisfying 
\beq
\int_{T\T^n}\partial_{u}L(x,v,0)\om d\mu\geq 0,\quad\forall \,\mu\in\mathcal M'.
\eeq
Here 
\ben
L : T\T^n\times\R &\longrightarrow& \R \\
(x,v,u)&\longrightarrow& \max_{p\in T_x^*\T^n}\big\{\langle v,p\rangle -H(x,p,u)\big\}.
\een
 is the {\it  Lagrangian} associated with $H$ and $\mathcal M'$ is a selected set of all {\it stochastic Mather measures} associated with \eqref{eq:hj-e-cri} (see Sec. \ref{s2} for the definition of $\cM'$).
\end{thm}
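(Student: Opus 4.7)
The plan is to carry out a vanishing-discount argument tailored to degenerate viscous Hamilton--Jacobi equations, combining a priori estimates with a nonlinear adjoint (Evans--Tran) construction of stochastic Mather measures. Concretely I will (a) bound $\{u_\lb\}$ uniformly and extract a subsequential limit $u_0$ which, by stability of viscosity solutions, solves \eqref{eq:hj-e-cri}; (b) show $u_0\in\cS'$, so that $u_0\le\sup_{\om\in\cS'}\om$; (c) show $u_0(x)\ge\om(x)$ for every $\om\in\cS'$; and (d) observe that the resulting characterization is intrinsic, so every subsequential limit coincides with the same function, upgrading subsequential convergence to full convergence as $\lb\to 0_+$.

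\textbf{A priori bounds and compactness.} From (H3) and the comparison principle, testing \eqref{eq:hj-e} against the constants $\pm C/\rho_*$ (with $C$ controlling $\|H(\cdot,0,0)\|_\infty$) yields $\|u_\lb\|_\infty\le C/\rho_*$ uniformly in $\lb$. Building on (H2), (H4), (H5), I would next derive equicontinuity for $\{u_\lb\}$, either directly through Bernstein-type estimates along a nondegenerate regularization $\alpha_\eps=\alpha+\eps$ followed by a double limit $\eps,\lb\to 0$, or through the Lipschitz/H\"older bounds already developed in \cite{IMT1,IMT2} under the same growth hypotheses. Arzel\`a--Ascoli then produces a subsequential uniform limit $u_0$, which is a viscosity solution of \eqref{eq:hj-e-cri}.

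\textbf{Mather measures and membership in $\cS'$.} Linearizing \eqref{eq:hj-e} about $u_\lb$ gives the formal adjoint
\[
-\mathrm{div}\bigl(\partial_pH(x,d_xu_\lb,\lb u_\lb)\,\sigma_\lb\bigr)+\lb\,\partial_uH(x,d_xu_\lb,\lb u_\lb)\,\sigma_\lb-\Dt(\alpha\sigma_\lb)=\delta_{x_0},
\]
and solving it (on the $\eps$-regularized problem and passing $\eps\to 0$) yields nonnegative adjoint densities $\sigma_\lb$ whose pushforwards under $v_\lb:=-\partial_pH(x,d_xu_\lb,\lb u_\lb)$ are probability measures on $T\T^n$. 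A weak-$\ast$ subsequential limit as $\lb\to 0_+$ is a stochastic Mather measure $\mu\in\cM'$. Testing \eqref{eq:hj-e} against $\sigma_\lb$, using the Legendre identity $H(x,p,u)=\langle v,p\rangle-L(x,v,u)$ at $v=v_\lb$ and expanding in $\lb$ via (H3), one obtains an integral identity whose $\lb\to 0_+$ limit, after dividing by $\lb$, reads
\[
\int_{T\T^n}\partial_uL(x,v,0)\,u_0\,d\mu\ge 0.
\]
This is exactly the defining inequality of $\cS'$, so $u_0\in\cS'$ and the upper bound follows.

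\textbf{Reverse inequality and main difficulty.} For the lower bound, fix $\om\in\cS'$ and consider $w_\lb:=u_\lb-\om$. Using convexity of $H$ in $p$ and the bilateral monotonicity (H3), one controls
\[
\rho_*\lb\,(u_\lb-\om)\le H(x,d_xu_\lb,\lb u_\lb)-H(x,d_x\om,0)-\lb\,\partial_uH(x,d_x\om,0)\,\om + (\text{lower order}).
\]
Integrating against the adjoint density $\sigma_\lb$ produced above, using $\alpha\ge 0$ to dispose of the second-order terms by adjoint integration by parts, and passing to the limit, I expect to reach
\[
\rho_*\,(u_0(x_0)-\om(x_0))\ge -\int_{T\T^n}\partial_uL(x,v,0)\,\om\,d\mu\ge 0,
\]
where the last inequality is membership of $\om$ in $\cS'$. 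The main obstacle is the degeneracy of $\alpha$: the adjoint density $\sigma_\lb$ and its second-order moments are only controlled through the regularized parabolic equation, and justifying the integration by parts together with the weak-$\ast$ convergence to a genuine stochastic Mather measure in $\cM'$ requires carefully ordered limits $\eps\to 0$ then $\lb\to 0$, together with uniform moment bounds that must be extracted from (H2)--(H5). Once this is in place, combining the two inequalities gives $u_0=\sup_{\om\in\cS'}\om$, independent of the subsequence, so the entire family $u_\lb$ converges.
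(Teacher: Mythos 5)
Your overall architecture (uniform bounds, adjoint/Evans--Tran measures, an upper and a lower estimate, then uniqueness of the limit) matches the paper's, but two steps as written would fail. First, the $L^\infty$ bound: testing against constants $\pm C/\rho_*$ does not give $\|u_\lb\|_\infty\le C/\rho_*$. For a constant $A$ to be a supersolution of $H(x,d_xv,\lb v)=\alpha\Delta v$ you need $H(x,0,\lb A)\ge 0$, and (H3) only gives $H(x,0,\lb A)\ge H(x,0,0)+\rho_*\lb A$, so you are forced to take $A\sim \|H(\cdot,0,0)\|_\infty/(\rho_*\lb)$; constant barriers bound $\lb u_\lb$, not $u_\lb$. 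The uniform bound genuinely uses the normalization $c(H)=0$: the paper takes a solution $\om$ of \eqref{eq:hj-e-cri} and checks that $\om\pm\|\om\|_{L^\infty}$ are super/subsolutions of \eqref{eq:hj-e} (by (H3), since the shift has a sign), then applies Perron/comparison. Second, and more seriously, your argument that $u_0\in\cS'$ proves the defining inequality only for the \emph{one} measure $\mu$ obtained as a weak-$\ast$ limit of the adjoint densities along the same subsequence $\lb_i\to 0$. Membership in $\cS'$ requires $\int_{T\T^n}\partial_uL(x,v,0)\,\om\,d\mu\ge 0$ for \emph{every} $\mu\in\cM'$, including measures generated along unrelated subsequences $(\lb_j,\eta_j)$. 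Without this, the upper bound $\limsup u_\lb\le\sup_{\cS'}\om$ does not follow. The paper decouples the two limits by not testing the equation against $\sigma_\lb$ at all in this step: it mollifies $u_{\lb_i}$ into an approximate classical subsolution $\psi_i^\eta$, writes the Fenchel inequality $L(x,v,\lb_iu_{\lb_i})\ge\langle v,d_x\psi_i^\eta\rangle-H(x,d_x\psi_i^\eta,\lb_iu_{\lb_i})$, and integrates against an \emph{arbitrary} $\mu\in\cM'$, using only the two defining properties of a stochastic Mather measure (holonomy and $\int L(x,v,0)\,d\mu=0$). Your proposal is missing this idea.

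There is also a technical gap you gesture at but do not resolve: in the lower bound you subtract the equation for $\om$ from the one for $u_\lb$ and pair with the adjoint density, but $\om$ is only a viscosity solution of a degenerate second-order equation, so $d_x\om$ and $\Delta\om$ are not available pointwise. The paper's Lemma \ref{lem:com} is precisely the needed tool: it shows (via sup-convolution and the Jensen/Ishii equivalence of viscosity and distributional subsolutions) that the mollification $\om^\eta$ is a classical approximate subsolution with an error $S^\eta$ quantified well enough that $\frac1\lb\int S^\eta\theta_\lb^\eta\,dy$ vanishes in the ordered limit $\eta\to0$ then $\lb\to 0$. This ordering (and the estimate $|u_\lb^\eta-u_\lb|_{L^\infty}\le C'\eta/\lb$ of Lemma \ref{lem:visc-est}) is where the degeneracy of $\alpha$ is actually absorbed, rather than in moment bounds on $\sigma_\lb$ alone. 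With these three points repaired, your outline would align with the paper's proof.
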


\subsection{Background and strategy}

The establishment of $c(H)$ in \eqref{eq:hj-e-cri} was first studied by Lions, Papanicolaou and Varadhan \cite{LPV} for the case $\alpha(x) \equiv 0$ by using a homogenization approach. Precisely, they considered the following {\it discounted Hamilton-Jacobi equation}
\beq\label{eq:hj-dis}
\lb u+H(x,d_xu)=0, \quad x\in\T^n
\eeq
of which the viscosity solution $u_\eps$ is unique for any $\lb>0$, and showed the convergence of $\lb u_\lb$ to $-c(H)$ as $\lb\rightarrow 0_+$. That naturally leads to a question whether $u_\lb$ converges as $\lb\rightarrow 0_+$ as well. Such a question was firstly addressed by Gomes \cite{G2}, Iturriaga and Sanchez-Morgado \cite{ISM} under certain restricted assumptions. Afterwards, Davini, Fathi, Iturriaga and Zavidovique \cite{DFIZ}
gave a confirmed answer to this question in the case of \eqref{eq:hj-dis}. Their approach relies on a dynamical characterization of the Mather measures in light of the weak KAM theory. Following the same approach, other convergence problems were gradually considered in \cite{AAIY,CCIZ,IS,Tu}. Recently, the convergence of viscosity solution was proved in \cite{WYZ} for \eqref{eq:hj-e} with $\alpha(x)\equiv 0$ (as $\lb\rightarrow 0_+$), under Tonelli assumptions on $H(x,p,u)$. This work firstly proposed a method to characterize the Mather measures for first order Hamilton-Jacobi equations nonlinear of $u$ (also called {\it contact Hamilton-Jacobi equations} in the context of \cite{L,WWY}). Very recently works toward this topic can be found in \cite{C,Z}.

At the same time, the convergence of viscosity solutions  for the discounted Hamilton-Jacobi equations with degenerate diffusion was achieved by Mitake and Tran \cite{MT}. They present a novel characterization of the stochastic Mather measures by using the {\it adjoint method} developed in \cite{E}, which is different from the former definition of Mather measures given in \cite{G,ISM2}. Later, Ishii, Mitake and Tran also gave a general criterion to deal with similar convergence problems (as the discounted limit) in \cite{IMT1,IMT2}. These two works successfully used a linear programming method to define the Mather measures for equations of the form 
\[
H(x,d_x u,D^2u)+\lb u=0, \quad x\in\T^n,\lb>0.
\] 
However, this concise method can not apply to the case with Hamiltonians nonlinear of $u$ directly. That urges us to find other ways to characterize the Mather measures for \eqref{eq:hj-e}, then prove the convergence of viscosity solution $u_\lb$ as $\lb\rightarrow 0_+$.

In this paper, we first prove the uniform boundedness and equi-Lipschitzness of $\{u_\lb\}_{\lb\in(0,1]}$ in the spirit of {\it Bernstein's method}, see \cite{AT,Bar,Ber}. That gives us chance to get the convergence of $\{u_\lb\}_{\lb\in(0,1]}$ along subsequences, in view of the {\it Arzela-Ascoli Theorem}. On the other side, we can define the stochastic Mather measure associated with \eqref{eq:hj-e} by using the adjoint method. Besides, \cite{WYZ} also supplies a methodology to verify the asymptotic properties of the stochastic Mather measures as $\lb\rightarrow 0_+$ for Hamiltonians nonlinear of $u$ (see Proposition. \ref{prop:conv-mat-meas}). Consequently, that supplies a criterion to describe the accumulating points of $\{u_\lb\}_{\lb\in(0,1]}$ as $\lb\rightarrow 0_+$ (see Lemma \ref{lem:up} and Lemma \ref{lem:low}), then further indicates the accumulating point is unique.

\subsection{Organization of the article.}\label{s1.6}

 In Section \ref{s2}, we introduce the concept of stochastic Mather measures and a viewpoint of adjoint equations dealing with it. In Section \ref{s3}, we get a qualitative estimate of the solutions of \eqref{eq:hj-e}, then finally prove Theorem \ref{thm:1}. As a necessary complement, we give the Remark \ref{rmk:main} to elaborate the significance of 
 Theorem \ref{thm:1}. For the readability and consistency of this article, some lengthy independent conclusions are moved to Appendix.

\vspace{10pt}

\noindent{\bf Acknowledgement.} This work is supported by the National Natural Science Foundation of China (Grant No. 11901560). The author would like to thank the Laboratory of Mathematics for Nonlinear Science, Fudan University (LNMS) for the hospitality, where this research was initiated during the author's visiting in April 2021. 

\vspace{20pt}

\section{Stochastic Mather measures of \eqref{eq:hj-e-cri}}\label{s2}

Due to the assumptions (H1)-(H3), any viscosity solution $\om$ of \eqref{eq:hj-e-cri} has to be continuous, so we can shift it to the following two functions:
\[
\check{\om}(x):=\om(x)+|\om(x)|_{L^\infty}\geq 0,\quad \hat\om(x):=\om(x)-|\om(x)|_{L^\infty}\leq 0.
\]
Consquently, for any $\lb\in(0,1]$, we can verify that $\check\om$ (resp. $\hat\om$) is a supersolution (resp. subsolution) of \eqref{eq:hj-e}. By applying the {\it Perron's method} (see \cite{CIL} for instance), the following defined function 
\beq
u_\lb(x):=\sup\{v(x): \hat\om(x)\leq v(x)\leq \check\om(x), \text{$v(x)$ is a subsolution of \eqref{eq:hj-e}}\}
\eeq
is the unique viscosity solution of \eqref{eq:hj-e}. Therefore, there exists a constant $C_{p}>0$ such that
\be\label{eq:uni-bound}
|u_\lb(x)|\leq C_p,\quad\forall\,\lb\in(0,1].
\ee
Moreover, we can also prove that $\{u_\lb\}_{\lb\in(0,1]}$ are uniformly Lipschitz:
\begin{prop}
[Bernstein's method] 
\label{prop:bern}
Under the assumptions {\rm (H1)-(H5)}, there exists a constant $C_{\rm Lip}>0$ such that for any $\lb\in(0,1]$, the viscosity solution $u_\lb$ of \eqref{eq:hj-e} satisfies 
\beq\label{eq:uni-lip}
|d_x u_\lb|_{L^\infty}\leq C_{\rm Lip}.
\eeq
\end{prop}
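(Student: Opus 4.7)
The plan is to adapt Bernstein's method to the degenerate setting via a standard regularization. For $\varepsilon\in(0,1]$, I replace $\alpha$ by $\alpha_\varepsilon := \alpha + \varepsilon$, producing a uniformly elliptic semilinear equation
\[
H(x, d_x u_\lb^\varepsilon, \lb u_\lb^\varepsilon) = \alpha_\varepsilon(x)\,\Dt u_\lb^\varepsilon
\]
with a classical $C^2$ solution $u_\lb^\varepsilon$ whose sup-norm is bounded uniformly in $\lb,\varepsilon$ (the Perron construction producing \eqref{eq:uni-bound} goes through verbatim). It therefore suffices to prove a gradient bound for $u_\lb^\varepsilon$ independent of both $\lb\in(0,1]$ and $\varepsilon\in(0,1]$; the degenerate case then follows by letting $\varepsilon\to 0_+$ and invoking $C^0$-stability of viscosity solutions.

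Writing $u:=u_\lb^\varepsilon$ and $w:=\tfrac12|d_xu|^2$, I differentiate the equation in $x_k$, multiply by $u_{x_k}$, and sum to obtain
\[
\alpha_\varepsilon\Dt w - H_p\cdot d_xw = \alpha_\varepsilon|D^2u|^2 + H_x\cdot d_xu + 2\lb H_u w - d_x\alpha\cdot d_xu\,\Dt u,
\]
with all $H$-derivatives evaluated at $(x, d_xu, \lb u)$. Since $\T^n$ is compact, $w$ attains its maximum at some $x_0$, where $d_xw(x_0)=0$ and $\Dt w(x_0)\leq 0$, so the left-hand side is nonpositive, giving
\[
\alpha_\varepsilon|D^2u|^2 + H_x\cdot d_xu + 2\lb H_u w \leq d_x\alpha\cdot d_xu\,\Dt u \qquad\text{at }x_0.
\]
The critical new difficulty is controlling the right-hand cross term at points where $\alpha$ degenerates. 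Here I invoke the Glaeser-type inequality $|d_x\alpha|^2\leq C_\alpha\alpha\leq C_\alpha\alpha_\varepsilon$, valid for every nonnegative $\alpha\in C^2(\T^n)$ with $C_\alpha$ depending only on $\|D^2\alpha\|_\infty$. Combined with Cauchy-Schwarz and Young's inequality this yields $|d_x\alpha\cdot d_xu\,\Dt u|\leq \tfrac12 \alpha_\varepsilon|D^2u|^2 + nC_\alpha w$; absorbing half of the coercive term and dropping the nonnegative contribution $2\lb H_u w\geq 0$ (by (H3)) leaves
\[
\tfrac12\alpha_\varepsilon|D^2u(x_0)|^2 + H_x\cdot d_xu(x_0) \leq nC_\alpha\,w(x_0).
\]

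To close the estimate I apply (H4) in the form $H_x\cdot d_xu\geq -\sqrt n\,\kappa(C_p)\bigl(H(x, d_xu, 0)+\varsigma(C_p)\bigr)|d_xu|$, and combine the equation with (H3) to obtain $|H(x,d_xu,0)|\leq|\alpha\,\Dt u|+\rho^*C_p\leq\sqrt{n\|\alpha\|_\infty\,\alpha_\varepsilon|D^2u|^2}+\rho^*C_p$. A further Young-type absorption then collapses the inequality into a scalar estimate of the form
\[
\alpha_\varepsilon|D^2u(x_0)|^2 + H(x_0,d_xu(x_0),0) \leq C_1 + C_2|d_xu(x_0)|^2.
\]
Together with the superlinear coercivity $H(x,p,0)\geq K_m|p|^m-M_m$ from (H2) and the assumption $m>1$, this forces $|d_xu(x_0)|\leq C_{\mathrm{Lip}}$ with $C_{\mathrm{Lip}}$ depending only on the structural constants of (H1)-(H5), $C_p$, and $\|\alpha\|_{C^2}$. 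Since $x_0$ maximizes $w$ on $\T^n$, the same bound holds everywhere, and $\varepsilon\to 0_+$ concludes.

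The principal obstacle is to balance three structural pieces simultaneously at the critical point: the cross term $d_x\alpha\cdot d_xu\,\Dt u$, manageable only through the Glaeser inequality; the coupled growth $|H_x|\lesssim H(\cdot,p,0)$ from (H4), which prevents any naive termwise bound; and the absence of a definite sign gain from $2\lb H_u w$ as $\lb\to 0_+$. Hypothesis (H5) may further be invoked to refine intermediate $H_p$-terms if needed. The superlinear exponent $m>1$ of (H2) is ultimately what closes the loop, since it makes $|d_xu|^m$ dominate $|d_xu|^2$ in the final inequality for large $|d_xu|$.
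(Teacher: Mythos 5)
Your route is genuinely different from the paper's. The paper runs a \emph{weak} Bernstein argument directly on the non-smooth viscosity solution of the degenerate equation: it doubles variables, maximizes $u_\lb(x)-u_\lb(y)-L\phi(y)|x-y|-\tfrac1{2a}|x-y|^2$, invokes the Crandall--Ishii matrix lemma, and extracts the degeneracy of $\alpha$ through a cleverly chosen nonnegative matrix $A_s$ (the Lipschitz regularity of $\sqrt{\alpha}$ plays the role your Glaeser inequality $|d_x\alpha|^2\le C_\alpha\,\alpha$ plays). You instead regularize to a uniformly elliptic equation, assume a classical solution, and run the classical Bernstein computation at an interior maximum of $|d_xu|^2/2$. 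Your computation of the Bernstein identity, the sign of the zeroth-order term via (H3), the Glaeser absorption of the cross term $d_x\alpha\cdot d_xu\,\Dt u$, and the use of (H4) together with the equation to control $H_x\cdot d_xu$ are all correct and do reproduce the paper's estimate in a more transparent way. What the paper's approach buys is that it never needs $u$ to be twice differentiable; what yours buys is a much shorter computation --- at the price of the regularity issue below.

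Two points need repair. First, the existence of a classical $C^2$ solution $u_\lb^\varepsilon$ with $\sup$-norm bounded uniformly in $\varepsilon$ is asserted, not proved: the Perron barriers $\check\om,\hat\om$ of the paper are built from a viscosity solution of the \emph{degenerate} critical equation and are not obviously super/subsolutions of the $\alpha+\varepsilon$ equation (one must either perturb the ergodic constant or use the paper's Lemma~\ref{lem:visc-est}-type estimate $|u_\lb^\varepsilon-u_\lb|\le C\sqrt{\varepsilon}/\lb$ for small $\varepsilon$ at fixed $\lb$); and classical regularity for a uniformly elliptic equation whose gradient dependence is only controlled by (H2) with arbitrary $m>1$ is itself usually obtained \emph{via} an a priori gradient bound, so this step is where the circularity risk lives and why the paper works with the viscosity formulation. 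Second, your concluding inference is wrong as stated: from $\alpha_\varepsilon|D^2u(x_0)|^2+H(x_0,d_xu(x_0),0)\le C_1+C_2|d_xu(x_0)|^2$ and (H2), the term $K_m|p|^m$ does \emph{not} dominate $C_2|p|^2$ when $1<m\le 2$, which is precisely the case the paper reduces to. The estimate that actually closes is available from your own intermediate steps: $\alpha_\varepsilon|D^2u(x_0)|^2\le C(1+|d_xu(x_0)|^2)$ gives $\sqrt{\alpha_\varepsilon}\,|D^2u(x_0)|\le C(1+|d_xu(x_0)|)$, hence by the equation and (H3), $H(x_0,d_xu(x_0),0)\le \sqrt{n\|\alpha_\varepsilon\|_\infty}\,\sqrt{\alpha_\varepsilon}\,|D^2u(x_0)|+\rho^*C_p\le C'(1+|d_xu(x_0)|)$, which is \emph{linear} in $|d_xu(x_0)|$ and therefore closes against $K_m|p|^m-M_m$ for every $m>1$. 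With that substitution (and a justification of the regularization step), your argument is sound.
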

\begin{proof}
The proof relies on the idea of  Bernstein's method developped in \cite{Ber,Bar} but need more arguments due to the low regularity of $u_\lb$. Roughly speaking, we will show that a power of $|d_xu_\lb|_{L^\infty}$ should be a subsolution  (in a weak sense) of certain elliptic equation, so the comparison principle constrains $|d_xu_\lb|_{L^\infty}$ from above. Such a procedure has been proved to be successful in \cite{DLP,AT} for Hamiltonians linear of $u$. Now we adapt their ideas to more generalized Hamiltonians.

Without loss of generality, we can endow $\T^n$ with a coordinate, 
so it surffices to prove that $|d_xu_\lb|_{L^\infty}$ is uniformly bounded in the domain $B_1:=\{x\in\T^n: |x|\leq 1\}$ for $\lb\in(0,1]$. In other words, we just need to show that there exists a uniform constant $L>0$, such that for any $\lb\in(0,1]$ and $\hat x\in {\rm int} B_1$ (the interior of $B_1$), 
\[
\limsup_{x\rightarrow\hat x}\frac{u_\lb(\hat x)-u_\lb(x)}{|\hat x-x|}\leq L.
\]
Otherwise, for any $L\geq 1$ sufficiently large, we could always find $\lb\in(0,1]$ and $\hat x\in {\rm int} B_1$ such that 
\be\label{eq:assu-0}
\limsup_{x\rightarrow\hat x}\frac{u_\lb(\hat x)-u_\lb(x)}{|\hat x-x|}> L,
\ee
we will show that leads to a contradiction by obtaining an upper bound for $L$.

\underline{Step 1}. If there exist a $L\geq 1$ and $x_0,y_0\in {\rm int} B_1$  such that 
\be\label{eq:assu-1}
u_\lb(x_0)-u_\lb(y_0)-L|x_0-y_0|=\sup_{x,y\in B_1}\Big(u_\lb(x)-u_\lb(y)-L|x-y|\Big)>0,
\ee
then $x_0\neq y_0$. Due to Lemma 3.2 of \cite{CIL}, for any $\eps>0$, there exist $X_\eps,Y_\eps\in\mathbb S(n)$ (the set of $n\times n-$symmetric matrices) satisfying 
\be\label{eq:toy-matrix} 
\begin{pmatrix}
   X_\eps   & 0   \\
    0  &  -Y_\eps
\end{pmatrix}\leq J+\eps J^2
\ee
where `$\leq$' is the usual order of $\mathbb S(n)$ and 
\[
J_{2n\times 2n}:=\frac{L}{|x_0-y_0|}\begin{pmatrix}
   Z   &   -Z \\
    -Z  &  Z
\end{pmatrix},\quad Z:=I_{n\times n}-\frac{x_0-y_0}{|x_0-y_0|}\otimes\frac{x_0-y_0}{|x_0-y_0|}
\]
such that 
\be\label{eq:toy-ineq}
& &H(x_0,L\frac{x_0-y_0}{|x_0-y_0|},\lb u_{\lb}(x_0))-\alpha(x_0){\rm tr} (X_\eps)\\
&\leq& 0\leq H(y_0,L\frac{x_0-y_0}{|x_0-y_0|},\lb u_{\lb}(y_0))-\alpha(y_0){\rm tr} (Y_\eps).\nonumber
\ee
On the other side, for any $s>1$, we define a nonnegative matrix 
\[
A_s:=\frac12 \begin{pmatrix}
   s^2 \alpha(x_0) I_{n\times n}  &   s\sqrt{\alpha(x_0)\alpha(y_0)} I_{n\times n}\\
    s\sqrt{\alpha(y_0)\alpha(x_0)} I_{n\times n}  &  \alpha(y_0) I_{n\times n} 
\end{pmatrix}
\]
and multiply it to \eqref{eq:toy-matrix} on the right, then we get 
\[
{\rm tr}(s^2\alpha(x_0)X_\eps-\alpha(y_0)Y_\eps)\leq{\rm tr}(JA_s)+\eps{\rm tr}(J^2A_s).
\]
Combining this inequality with \eqref{eq:toy-ineq} then making $\eps\rightarrow 0_+$, we get 
\be\label{eq:toy-key}
s^2 H(x_0,L\frac{x_0-y_0}{|x_0-y_0|},\lb u_{\lb}(x_0))-H(y_0,L\frac{x_0-y_0}{|x_0-y_0|},\lb u_{\lb}(y_0))\leq {\rm tr}(JA_s).
\ee
On one side, by computation we get 
\[
{\rm tr}(JA_s)\leq \frac{L \big|\sqrt{\alpha(x)}\big|_{C^1}^2}{|x_0-y_0|}\Big((s-1)^2+|x_0-y_0|^2\Big)
\]
On the other side, if we set $s^2=1+\beta|x_0-y_0|$ with $\beta>0$, 
\ben
& &s^2 H(x_0,L\frac{x_0-y_0}{|x_0-y_0|},\lb u_{\lb}(x_0))-H(y_0,L\frac{x_0-y_0}{|x_0-y_0|},\lb u_{\lb}(y_0))\\
&=&(s^2-1) H(x_0,L\frac{x_0-y_0}{|x_0-y_0|},\lb u_{\lb}(x_0))\\
& &+ H(x_0,L\frac{x_0-y_0}{|x_0-y_0|},\lb u_{\lb}(x_0))-H(x_0,L\frac{x_0-y_0}{|x_0-y_0|},\lb u_{\lb}(y_0))+\\
& &H(x_0,L\frac{x_0-y_0}{|x_0-y_0|},\lb u_{\lb}(y_0))-H(y_0,L\frac{x_0-y_0}{|x_0-y_0|},\lb u_{\lb}(y_0))\\
&\geq&(s^2-1) H(x_0,L\frac{x_0-y_0}{|x_0-y_0|},\lb u_{\lb}(x_0))-\\
& &\big|H(x_0,L\frac{x_0-y_0}{|x_0-y_0|},\lb u_{\lb}(y_0))-H(y_0,L\frac{x_0-y_0}{|x_0-y_0|},\lb u_{\lb}(y_0))\big|\\
&\geq&(s^2-1) \Big(H(x_0,L\frac{x_0-y_0}{|x_0-y_0|},0)-C_p\rho^*\Big)-\kappa(C_p)\Big(H(x_0,L\frac{x_0-y_0}{|x_0-y_0|},0)+\varsigma(C_p)\Big)|x_0-y_0|\\
&=&\Big((\beta-\kappa(C_p)) H(x_0,L\frac{x_0-y_0}{|x_0-y_0|},0))-\beta C_p\rho^*-\kappa(C_p)\varsigma(C_p)\Big)|x_0-y_0|
\een
in which the first inequality is due to (H3) and the second inequality is due to (H4). Turning back to \eqref{eq:toy-key}, we finally get 
\[
(\beta-\kappa(C_p)) H(x_0,L\frac{x_0-y_0}{|x_0-y_0|},0))-\beta C_p\rho^*-\kappa(C_p)\varsigma(C_p)\leq L(1+\beta^2)\big|\sqrt{\alpha(x)}\big|_{C^1}^2.
\]
As long as $\beta>\kappa(C_p)$, $L$ has to be upper bounded due to (H2) (by some constant depending only on $C_p,\rho^*$).

\underline{Step 2}. If the supremum of \eqref{eq:assu-1} can not be obtained, then we have to make use of certain cutoff function and slightly modify \eqref{eq:assu-1} to make the supremum available. Precisely, we pick a positive smooth function $\phi: {\rm int}B_{3/2}\subsetneq\T^n\rightarrow[1,+\infty)$ which satisfies $\phi\equiv 1$ on $B_1$, $\lim_{x\rightarrow\partial B_{3/2}}\phi(x)\rightarrow+\infty$ and 
\[
 \forall\, x\in {\rm int}B_{3/2},\;\left\{
\begin{aligned}
& |d_x\phi(x)|\leq C(\phi(x))^{m},\\
& |D^2\phi(x)|\leq C(\phi(x))^{2m-1}
\end{aligned}
\right.
\]
for some constant $C>0$. Without loss of generality, we can assume (H3) holds for $1<m\leq2$. Actually, any regularization of the map $x\rightarrow \max\{(2{\rm dist}(x,\partial B_{3/2}))^{-\frac1{m-1}},1\}$ can be such a cutoff function. Benefiting from it, for any $a>0$ sufficiently small, there always exist $x_a,y_a\in{\rm int} B_{3/2}$ such that 
\be\label{eq:assu-2}
& &u_\lb(x_a)-u_\lb(y_a)-L\phi(y_a)|x_a-y_a|-\frac1{2a}|x_a-y_a|^2\\
&=&\sup_{x,y\in B_{3/2}}\Big(u_\lb(x)-u_\lb(y)-L\phi(y)|x-y|-\frac1{2a}|x-y|^2\Big)>0.\nonumber
\ee
This conclusion was firstly proved in Theorem 3.1 of \cite{AT}, but for the consistency we sketch their procedure here: \eqref{eq:assu-0} implies for any $a>0$, we can find $x,y\in B_1$ such that 
\[
u_\lb(x)-u_\lb(y)-L\phi(y)|x-y|-\frac1{2a}|x-y|^2>0,
\]
so the supreme has to be positive. Notice that $y\notin\partial B_{3/2}$ and
\[
\sup_{x,y\in B_{3/2}}\Big(u_\lb(x)-u_\lb(y)-L\phi(y)|x-y|-\frac1{2a}|x-y|^2\Big)\leq \sup_{x,y\in B_{3/2}}|u_\lb(x)-u_\lb(y)|,
\]
so $|x_a-y_a|\geq d_{\lb}(a)>0$ for some constant $d_\lb(a)$ due to the uniform continuity of $u_\lb$ on $ B_{3/2}$. Furthermore,
\[
0<\sup_{x,y\in B_{3/2}}\Big(u_\lb(x)-u_\lb(y)-L\phi(y)|x-y|-\frac1{2a}|x-y|^2\Big)\leq \sup_{y\in B_{3/2}}\Big({\rm osc}_{B_{3/2}}u_\lb-L\phi(y)d_\lb(a)\Big)
\]
imposes ${\rm dist}(y_a,\partial B_{3/2})\geq \frac{L d_\lb(a)}{2C_p}$ since ${\rm osc}_{B_{3/2}}u_\lb\leq 2C_p<+\infty$. If $x_a\in\partial B_{3/2}$, then 
\ben
u_\lb(x_a)-u_\lb(y_a)-L\phi(y_a)|x_a-y_a|-\frac1{2a}|x_a-y_a|^2&\leq& 2C_p-L\frac{C|x_a-y_a|}{{\rm dist}(y,\partial B_{3/2})}\\
&\leq& 2C_p-L C<0
\een
as long as $L>2C_p/C$. That contradicts the positiveness of the supreme, so $x_a,y_a\in {\rm int} B_{3/2}$ is proved. Moreover, we get
\[
|x_a-y_a|\leq 2\sqrt{C_p a},\quad\forall \,a>0
\]
and
\be\label{eq:bypro}
\limsup_{a\rightarrow 0_+}(L\phi(y_a)|x_a-y_a|+\frac1{2a}|x_a-y_a|^2)\leq \limsup_{a\rightarrow 0_+}\sup_{\substack{x,y\in B_{3/2}\\|x-y|\leq2\sqrt{C_pa}}}\{u(y)-u(x)\}=0
\ee
as a byproduct for later use. Benefiting from \eqref{eq:assu-2}, for any $\eps>0$ and sufficiently small $a>0$, once again we take $X_{\eps,a},Y_{\eps,a}\in\mathbb S(n)$ satisfying 
\be\label{eq:toy-matrix-2} 
\begin{pmatrix}
   X_\eps   & 0   \\
    0  &  -Y_\eps
\end{pmatrix}\leq J_a+\eps J_a^2
\ee
with
\[
J_{a}:=\underbrace{\frac{L\phi(y_a)}{|x_a-y_a|}\begin{pmatrix}
   Z_1   &   -Z_1 \\
    -Z_1  &  Z_1
\end{pmatrix}+\frac1a\begin{pmatrix}
    I_{n\times n}  &   - I_{n\times n} \\
    - I_{n\times n}  &   I_{n\times n}
\end{pmatrix}}_{:=J'_a}+\underbrace{L\begin{pmatrix}
  0    &  Z_2  \\
    Z_2^t  &  Z_3
\end{pmatrix}}_{:=J''_a}
\]
and
\[\left\{
\begin{aligned}
&Z_1:=I_{n\times n}-\frac{x_a-y_a}{|x_a-y_a|}\otimes\frac{x_a-y_a}{|x_a-y_a|},\\
&Z_2:=d_x\phi(y_a)\otimes\frac{x_a-y_a}{|x_a-y_a|},\\
&Z_3:=-(Z_2+Z_2^t)+D^2\phi(y_a)|x_a-y_a|.
\end{aligned}
\right.
\]
such that 
\be\label{eq:toy-ineq-2}
& &H\Big(x_a,\big(L\phi(y_a)+\frac{|x_a-y_a|}{a}\big)\frac{x_a-y_a}{|x_a-y_a|},\lb u_{\lb}(x_a)\Big)-\alpha(x_a){\rm tr} (X_{\eps,a})\leq 0\\
&\leq& H\Big(y_a,\underbrace{\big(L\phi(y_a)+\frac{|x_a-y_a|}{a}\big)\frac{x_a-y_a}{|x_a-y_a|}}_{:=P_a}-\underbrace{L|x_a-y_a|d_x\phi(y_a)}_{:=Q_a}, \lb u_{\lb}(y_a)\Big)-\alpha(y_a){\rm tr} (Y_{\eps,a}).\nonumber
\ee
Similarly, for any $s>1$, we define a nonnegative matrix 
\[
A_s:=\frac12 \begin{pmatrix}
   s^2 \alpha(x_a) I_{n\times n}  &   s\sqrt{\alpha(x_a)\alpha(y_a)} I_{n\times n}\\
    s\sqrt{\alpha(y_a)\alpha(x_a)} I_{n\times n}  &  \alpha(y_a) I_{n\times n} 
\end{pmatrix}
\]
and multiply it to \eqref{eq:toy-matrix-2} on the right, then we get 
\[
{\rm tr}(s\alpha(x_a)X_{\eps,a}-\alpha(y_a)Y_{\eps,a})\leq{\rm tr}(J_aA_s)+\eps{\rm tr}(J_a^2A_s).
\]
Combining this inequality with \eqref{eq:toy-ineq-2} then making $\eps\rightarrow 0_+$, we get 
\be\label{eq:toy-key-2}
s H(x_a,P_a,\lb u_{\lb}(x_a))-H(y_a,P_a-Q_a,\lb u_{\lb}(y_a))\leq {\rm tr}(J_aA_s).
\ee
The right hand side can be estimated by 
\ben
{\rm tr}(J_aA_s)&=&{\rm tr}(J'_aA_s)+{\rm tr}(J''_aA_s)\\
&\leq&\big|\sqrt{\alpha(x)}\big|_{C^1}^2|P_a|(1+\beta^2)|x_a-y_a|+\big|\sqrt{\alpha(x)}\big|_{C^1}^2(1+\beta)|Q_a| \\
& &+\frac12 L \big|\sqrt{\alpha(x)}\big|_{C^1}^2|D^2\phi(y_a)|\cdot|x_a-y_a|
\een
in view of $|P_a|=L\phi(y_a)+|x_a-y_a|/a$, $|Q_a|=L|x_a-y_a|\cdot|d_x\phi(y_a)|$ and $s:=1+\beta|x_a-y_a|$ ($\beta>0$).  On the other side, the left hand side of \eqref{eq:toy-key-2} satisfies
\ben 
& &s H(x_a,P_a,\lb u_{\lb}(x_a))-H(y_a,P_a-Q_a,\lb u_{\lb}(y_a))\\
&=&(s-1) H(x_a,P_a,\lb u_{\lb}(x_a))+ H(x_a,P_a,\lb u_{\lb}(x_a))- H(x_a,P_a,\lb u_{\lb}(y_a))+\\
& & H(x_a,P_a,\lb u_{\lb}(y_a))- H(y_a,P_a,\lb u_{\lb}(y_a))+H(y_a,P_a,\lb u_{\lb}(y_a))-H(y_a,P_a-Q_a,\lb u_{\lb}(y_a))\\
&\geq&(s-1) H(x_a,P_a,\lb u_{\lb}(x_a))-\kappa(C_p)\Big(H(x_a,P_a,0)+\varsigma(C_p)\Big)|x_a-y_a|
\een
\ben
& &-|H(y_a,P_a,\lb u_{\lb}(y_a))-H(y_a,P_a-Q_a,\lb u_{\lb}(y_a))|\\
&\geq&(s-1) \Big(H(x_a,P_a,0)-C_p\rho^*\Big)-\kappa(C_p)\Big(H(x_a,P_a,0)+\varsigma(C_p)\Big)|x_a-y_a|\\
& &-\xi(C_p)\Big(H(y_a, P_a, \lb u_\lb(y_a))+\eta(C_p)\Big)\frac{|Q_a|}{|P_a|+1}\\
&\geq&\Big(\beta-\kappa(C_p) -\xi(C_p)[1+\kappa(0)]C\phi^{m-1}(y_a)\Big) H(x_a,P_a,0))|x_a-y_a|\\
& &-\Big(\beta C_p\rho^*+\kappa(C_p)\varsigma(C_p)+\xi(C_p)[\kappa(0)\varsigma(0)+C_p\rho^*+\eta(C_p)]C\phi^{m-1}(y_a)\Big)|x_a-y_a|
\een
in which the first inequality is due to a similar argument as in Step 1, the second inequality is due to (H5) and the lst inequality is due to 
\[
|Q_a|\leq C\phi^{m-1}(y_a)|x_a-y_a|\cdot |P_a|\leq CL^{1-m}|P_a|^m|x_a-y_a|.
\]
Due to \eqref{eq:bypro} and (H2), there holds
\ben
& &K_m\Big[\Big(\beta-\kappa(C_p) -\xi(C_p)[1+\kappa(0)]C\phi^{m-1}(y_a)\Big)\\
& &-\big|\sqrt{\alpha(x)}\big|_{C^1}^2(1+\beta)CL^{1-m}-\frac12\big|\sqrt{\alpha(x)}\big|_{C^1}^2CL^{1-m}\phi^{m-1}(y_a)\Big]|P_a|^m\\
&\leq&M_m\Big(\beta-\kappa(C_p) -\xi(C_p)[1+\kappa(0)]C\phi^{m-1}(y_a)\Big)+(1+\beta^2)\big|\sqrt{\alpha(x)}\big|_{C^1}^2|P_a|.
\een
Without loss of generality, we can assume $L\geq (2C\big|\sqrt{\alpha(x)}\big|_{C^1}^2)^{\frac1{(m-1)}}$. Consequently, we take 
\[
\beta=2\kappa(C_p)+1+4\Big(\xi(C_p)(1+\kappa(0))C+\frac14\Big)\phi^{m-1}(y_a),
\]
then further get 
\ben
& &K_m\Big(\xi(C_p)(1+\kappa(0))C+\frac14\Big)\phi^{m-1}(y_a)|P_a|^m\\
&\leq& M_m\Big(\kappa(C_p)+1+[3\xi(C_p)[1+\kappa(0)]C+1]\phi^{m-1}(y_a)\Big)+(1+\beta^2)\big|\sqrt{\alpha(x)}\big|_{C^1}^2|P_a|.
\een
Dividing both sides by $\Big(\xi(C_p)(1+\kappa(0))C+\frac14\Big)\phi^{m-1}(y_a)$, we get 
\ben
K_m|P_a|^m&\leq& M_m\Big(4\kappa(C_p)+12C\xi(C_p)(1+\kappa(0))+8\Big)\\
& &+\Big(4\big|\sqrt{\alpha(x)}\big|_{C^1}^2[1+(1+2\kappa(C_p))^2]+8(1+2\kappa(C_p))\Big)|P_a|\\
& &+4(1+4\xi(C_p)(1+\kappa(0))C)L^{1-m}|P_a|^m.
\een
By strengthening the second restriction to $L\geq \Big(\dfrac{8(1+4\xi(C_p)(1+\kappa(0))C)}{K_m}\Big)^{\frac1{m-1}}$, we obtain
\ben
\frac{K_m}2|P_a|^m&\leq&  M_m\Big(4\kappa(C_p)+12C\xi(C_p)(1+\kappa(0))+8\Big)\\
& &+\Big(4\big|\sqrt{\alpha(x)}\big|_{C^1}^2[1+(1+2\kappa(C_p))^2]+8(1+2\kappa(C_p))\Big)|P_a|
\een
which imposes $L\leq |P_a|\leq C_*<+\infty$ for some constant $C_*=C_*(C_p,K_m,M_m,m,\big|\sqrt{\alpha(x)}\big|_{C^1})$.
\end{proof}

Above preliminaries guarantee the convergence of $u_\lb$ along subsequences, in view of the Arzela-Ascoli Theorem. To show whether this convergence holds for the whole sequence $\lb\rightarrow 0_+$ or not, the following definition is needed.

\begin{defn}[Mather measure]
Denote by $\cP(T\T^n)$ the set of probability measures on $T\T^n$. A probability measure $\mu\in\cP(T\T^n)$ is called a {\it stochastic Mather measure}, if it satisfies:
\begin{itemize}
\item $\int_{TM} L(x,v,0)d\mu(x,v)=c(H)$;
\item $\int_{TM}\langle v,\nabla\varphi(x)\rangle-\alpha(x)\Dt\varphi(x) d\mu(x,v)=0$, for any $\varphi(x)\in C^2(\T^n,\R)$.
\end{itemize}
We denote by $\mathcal M$ the set of all stochastic Mather measures. Next, we will show how to get the stochastic Mather measures and use them to describe the variational properties of $u_\lb$.
\end{defn}

\subsection{Adjoint equation of \eqref{eq:hj-e}}

Evans firstly introduced the nonlinear adjoint method for first order Hamilton-Jacobi equations to study the vanishing viscosity process. Afterwards, in the works \cite{MT,T} this method was used to give significant estimate about the viscosity solutions, even for nonconvex Hamiltonians. Following their procedure, for each $\eta>0$, we consider the approximation of \eqref{eq:hj-e} as
\beq\label{eq:hj-e-app}\tag{HJ$_{e}^{\eta}$}
H(x,d_xu_\lb^\eta,\lb u_\lb^\eta)=(\alpha(x)+\eta^2)\Dt u_\lb^\eta,\quad  x\in \T^n.
\eeq
By a standard analysis, the following estimate can be proved:
\begin{lem}\label{lem:visc-est}
Let $u_\lb^\eta$ and $u_\lb$ be the solutions of \eqref{eq:hj-e-app} and \eqref{eq:hj-e} respectively. There exists a constant $C' > 0$ independent of $\lb,\eta\in (0,1]$ such that
\beq\label{eq:visc-est}
|u_\lb^\eta-u_\lb|_{L^\infty}\leq C'\frac\eta\lb.
\eeq
\end{lem}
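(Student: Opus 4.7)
The plan is to produce classical super- and subsolutions of \eqref{eq:hj-e} by shifting $u_\lb^\eta$ by a constant of size $O(\eta/\lb)$, and then conclude by the comparison principle for \eqref{eq:hj-e}. Concretely, I would define $v^{\pm}:=u_\lb^\eta\pm C\eta/\lb$ for a constant $C$ to be determined; since \eqref{eq:hj-e-app} is uniformly elliptic with ellipticity constant $\eta^2$, classical elliptic theory gives $u_\lb^\eta\in C^{2,\alpha}$, so the $v^\pm$ are smooth and are admissible as classical super- and subsolutions of \eqref{eq:hj-e}.

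A short calculation using \eqref{eq:hj-e-app} and the strict monotonicity (H3) reduces each sub/supersolution property to a pointwise bound on $\eta\Delta u_\lb^\eta$. For $v^+$ one has
\[
H(x,d_xv^+,\lb v^+)=H(x,d_xu_\lb^\eta,\lb u_\lb^\eta+C\eta)\geq (\alpha(x)+\eta^2)\Delta u_\lb^\eta+\rho_*C\eta,
\]
which exceeds $\alpha(x)\Delta v^+=\alpha(x)\Delta u_\lb^\eta$ precisely when $\eta\Delta u_\lb^\eta(x)\geq -\rho_*C$ for all $x\in\T^n$. Symmetrically, $v^-$ is a subsolution of \eqref{eq:hj-e} iff $\eta\Delta u_\lb^\eta(x)\leq \rho_*C$. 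Thus the whole lemma reduces to a uniform two-sided pointwise bound
\[
\|\eta\Delta u_\lb^\eta\|_{L^\infty(\T^n)}\leq C_0,\qquad \lb,\eta\in(0,1],
\]
after which the choice $C:=C_0/\rho_*$ and the comparison principle for \eqref{eq:hj-e} deliver $|u_\lb^\eta-u_\lb|_{L^\infty}\leq C'\eta/\lb$ with $C'=C_0/\rho_*$.

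The main obstacle is precisely this second-order bound. Proposition~\ref{prop:bern}, applied uniformly to \eqref{eq:hj-e-app}, supplies the first-order bound $|d_xu_\lb^\eta|_{L^\infty}\leq C_{\rm Lip}$, hence $|H(x,d_xu_\lb^\eta,\lb u_\lb^\eta)|\leq C$ and therefore $(\alpha(x)+\eta^2)|\Delta u_\lb^\eta|\leq C$; but this only yields $\|\eta^2\Delta u_\lb^\eta\|_{L^\infty}\leq C$, which is one full factor of $\eta$ short of what is needed. To recover the missing power of $\eta$ I would run a second-order Bernstein-type argument: differentiate \eqref{eq:hj-e-app} once in $x$, and apply the maximum principle to an auxiliary function built from $D^2u_\lb^\eta$ (for instance, $\eta\Delta u_\lb^\eta$ balanced by a multiple of $|d_xu_\lb^\eta|^2$), using the convexity (H1) and coercivity (H2) to dominate the quadratic Hessian terms and (H4)-(H5) for the first-order remainders. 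The delicate point is that standard Schauder theory with ellipticity constant $\eta^2$ only gives $|D^2u_\lb^\eta|=O(\eta^{-2})$; capturing the improved bound $|D^2u_\lb^\eta|=O(\eta^{-1})$ needed here requires exploiting the specific nonlinear structure of $H$, in the same spirit as the first-order argument of Proposition~\ref{prop:bern}.
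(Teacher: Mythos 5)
Your reduction of the lemma to the two-sided pointwise bound $|\eta\Delta u_\lb^\eta|_{L^\infty}\leq C_0$ is algebraically correct, but that bound is the whole difficulty, and it is not merely unproven in your sketch --- it is false in general. Already in the non-degenerate model case $\alpha\equiv 0$, where \eqref{eq:hj-e-app} is the vanishing-viscosity approximation of a first-order convex Hamilton--Jacobi equation with viscosity $\eta^2$, the solution smooths the concave kinks of the limit over a spatial scale of order $\eta^2$, so that $\Delta u_\lb^\eta\sim -\eta^{-2}$ near such points; the equation itself only yields $(\alpha(x)+\eta^2)|\Delta u_\lb^\eta|\leq C$, and this order cannot be improved pointwise by a Bernstein argument on $D^2u_\lb^\eta$ (convexity of $H$ gives at best one-sided semiconcavity-type control). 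Hence the shifted functions $u_\lb^\eta\pm C\eta/\lb$ are not super/subsolutions of \eqref{eq:hj-e}, and the comparison argument does not close.

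The paper avoids the pointwise bound entirely by using the nonlinear adjoint method: it only needs second-derivative control \emph{on average against the adjoint measure}. Integrating the Bernstein identity for $\psi=|d_xu_\lb^\eta|^2/2$ against the solution $\theta_\lb^\eta$ of \eqref{eq:hj-ad} (with source $\dt_{x_0}$) yields
\begin{equation*}
\int_{\T^n}(\alpha+\eta^2)\,|D^2u_\lb^\eta|^2\,\theta_\lb^\eta\,dx\leq C,
\qquad\text{hence}\qquad
\int_{\T^n}|D^2u_\lb^\eta|^2\,\theta_\lb^\eta\,dx\leq \frac{C}{\eta^2}.
\end{equation*}
One then differentiates \eqref{eq:hj-e-app} in $\eta$, tests against $\theta_\lb^\eta$ with $x_0$ chosen at the maximum of $|\partial_\eta u_\lb^\eta|$, and H\"older's inequality together with the normalization \eqref{eq:meas-est} converts the weighted $L^2$ Hessian bound into $|\partial_\eta u_\lb^\eta|_{L^\infty}\leq C/\lb$; integrating in $\eta$ gives \eqref{eq:visc-est}. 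The essential point your proposal misses is that the error term $2\eta\Delta u_\lb^\eta$ need only be controlled after integration against $\theta_\lb^\eta$, where Cauchy--Schwarz recovers exactly the missing factor of $\eta$ that is unavailable pointwise. If you want to salvage your comparison-principle strategy, you would have to replace the constant shift by a genuinely $x$-dependent corrector absorbing the set where $D^2u_\lb^\eta$ is large, which is in effect what the adjoint measure does for you.
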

Due to this Lemma, we can introduce the associated adjoint equation of the linearized operator of \eqref{eq:hj-e} by:
\beq\tag{AJ$_{e}$}\label{eq:hj-ad}
\lb \partial_u H(x,d_x u_\lb^\eta,0)\theta_\lb^\eta-{\rm div}\big(\partial_p H(x,d_xu_\lb^\eta,0)\theta_\lb^\eta\big) =\Dt(\alpha(x)\theta_\lb^\eta)+\eta^2\Dt\theta_\lb^\eta+\lb \delta_{x_0}
\eeq
for some $x_0\in \T^n$ and $\dt_{x_0}\in\cP(\T^n)$ being the Dirac measure at this point. We can also prove that
\beq\label{eq:meas-est} 
\theta_\lb^\eta\geq 0, \quad \int_{\T^n} \partial_u H(x,d_xu_\lb^\eta,0)\theta_
\lb^\eta(x) dx=1.
\eeq
For the readability, we postpone the proof of Lemma \eqref{lem:visc-est} and \eqref{eq:meas-est} to Appendix \ref{a1}, and use them without any doubt in this section.

For any $\lb,\eta>0$, we get a probability measure $\nu_\lb^\eta\in\cP(T^*\T^n)$ via
\beq
\frac{\int_{\T^n} f(x, d_xu_\lb^\eta(x))\theta_\lb^\eta(x)dx}{\int_{\T^n} \theta_\lb^\eta(x)dx}=\iint_{T^*\T^n}f(x,p)d\nu_\lb^\eta(x,p)
\eeq
for all $f\in C_c(T^*\T^n,\R)$. We can pull back $\nu_\lb^\eta$ to a probability measure $\mu_\lb^\eta\in\cP(T\T^n)$ with respect to  the {\it Legrendre tranformation}
\[
\cL:(x,v)\in T\T^n\longrightarrow (x,\partial_vL(x,v,0))\in T^*\T^n, 
\]
i.e. $\mu_\lb^\eta:=\cL^*\nu_\lb^\eta$ satisfies 
\be\label{eq:mea-tran}
\iint_{T^*\T^n}f(x,p)d\nu_\lb^\eta(x,p)=\int_{T\T^n} f(x,\partial_vL(x,v,0)) d\mu_\lb^\eta(x,v)
\ee
for all $f\in C_c(T^*\T^n,\R)$. 

\begin{prop}\label{prop:conv-mat-meas}
Any weak* limit $\mu$ of $\mu_\lb^\eta$ as $\lb,\eta\rightarrow 0_+$ has to be a stochastic Mather measure. 
\end{prop}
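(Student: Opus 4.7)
The plan is to verify both defining properties of a stochastic Mather measure for the weak* limit $\mu$ by testing the adjoint equation \eqref{eq:hj-ad} against suitable functions and passing to the joint limit $(\lb,\eta)\to(0_+,0_+)$. Throughout, we use two uniform bounds. First, \eqref{eq:meas-est} together with (H3) give $1/\rho^*\leq\int_{\T^n}\theta_\lb^\eta\,dx\leq 1/\rho_*$. Second, the Bernstein argument of Proposition \ref{prop:bern}, applied to \eqref{eq:hj-e-app} with $\alpha(x)$ replaced by $\alpha(x)+\eta^2$ (the constants depend only on $|\sqrt{\alpha+\eta^2}|_{C^1}$, bounded for $\eta\in[0,1]$), yields $|d_xu_\lb^\eta|_{L^\infty}\leq C_{\rm Lip}$ uniformly. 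Combined with the $L^\infty$-control on $u_\lb^\eta$ via Lemma \ref{lem:visc-est} and \eqref{eq:uni-bound}, this forces all $\mu_\lb^\eta$ to be supported in one fixed compact subset $K\subset T\T^n$.

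For the closedness condition, pick $\varphi\in C^2(\T^n,\R)$, multiply \eqref{eq:hj-ad} by $\varphi$, integrate over $\T^n$, and integrate by parts (legitimate since $u_\lb^\eta,\theta_\lb^\eta\in C^2$ by uniform ellipticity for $\eta>0$). One obtains
\begin{align*}
&\int_{\T^n}\bigl[\langle\nabla\varphi,\partial_pH(x,d_xu_\lb^\eta,0)\rangle-\alpha(x)\Delta\varphi\bigr]\theta_\lb^\eta\,dx\\
&\qquad=\lb\varphi(x_0)+\eta^2\int_{\T^n}\Delta\varphi\,\theta_\lb^\eta\,dx-\lb\int_{\T^n}\varphi\,\partial_uH\,\theta_\lb^\eta\,dx.
\end{align*}
Dividing by $\int\theta_\lb^\eta\,dx$ and applying \eqref{eq:mea-tran} rewrites the left-hand side as $\int\bigl(\langle v,\nabla\varphi\rangle-\alpha(x)\Delta\varphi\bigr)d\mu_\lb^\eta$, while the right-hand side is $O(\lb)+O(\eta^2)$. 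Weak* convergence then delivers the second Mather bullet.

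For the energy identity, test \eqref{eq:hj-ad} against $u_\lb^\eta$ itself and combine with \eqref{eq:hj-e-app} multiplied by $\theta_\lb^\eta$; after integration by parts the common term $\int(\alpha+\eta^2)\Delta u_\lb^\eta\,\theta_\lb^\eta\,dx$ cancels, and by Legendre duality $\langle p,\partial_pH(x,p,0)\rangle-H(x,p,0)=L(x,\partial_pH(x,p,0),0)$. The resulting identity is
\begin{align*}
&\int_{\T^n}L(x,\partial_pH(x,d_xu_\lb^\eta,0),0)\theta_\lb^\eta\,dx\\
&\quad=\lb u_\lb^\eta(x_0)-\lb\int_{\T^n}u_\lb^\eta\,\partial_uH\,\theta_\lb^\eta\,dx+\int_{\T^n}[H(x,d_xu_\lb^\eta,\lb u_\lb^\eta)-H(x,d_xu_\lb^\eta,0)]\theta_\lb^\eta\,dx,
\end{align*}
whose right-hand side is $O(\lb)$ by (H3) and the $L^\infty$-bound on $u_\lb^\eta$. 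Dividing by $\int\theta_\lb^\eta\,dx$, the left-hand side becomes $\int L(x,v,0)\,d\mu_\lb^\eta$ via \eqref{eq:mea-tran}; since $L(\cdot,\cdot,0)$ is continuous on $K$, weak* convergence yields $\int L(x,v,0)\,d\mu=c(H)=0$.

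The main obstacle is securing the uniform compact-support property: Proposition \ref{prop:bern} must be extended to the regularized equation \eqref{eq:hj-e-app} with constants independent of $\eta$ (by direct inspection of the Bernstein proof), and $u_\lb^\eta$ must be uniformly bounded---this restricts the joint limit to subsequences along which Lemma \ref{lem:visc-est} keeps $|u_\lb^\eta|_{L^\infty}\leq C_p+o(1)$, e.g.\ $\eta/\lb\to 0$. Once this tightness is in hand, the two integration-by-parts identities above convert the decay of the adjoint error terms directly into the two Mather conditions on $\mu$.
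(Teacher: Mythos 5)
Your proof is correct and follows essentially the same route as the paper: test the adjoint equation \eqref{eq:hj-ad} against $\varphi$ for the closedness condition, and combine it with \eqref{eq:hj-e-app} (the paper does this via a pointwise Taylor identity in the $u$-variable rather than subtracting the two integrated equations, but the content is identical) for the energy identity, with all error terms of size $O(\lb+\eta)$ thanks to \eqref{eq:meas-est}, (H3) and Lemma \ref{lem:visc-est}. Your extra care about the uniform Lipschitz bound for $u_\lb^\eta$ (so that the $\mu_\lb^\eta$ are supported in a fixed compact set) fills a point the paper leaves implicit; note only that the restriction $\eta/\lb\to 0$ is not actually needed here, since $\lb|u_\lb^\eta|\leq \lb C_p+C'\eta\to 0$ in the joint limit.
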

\begin{proof} By a simple deduction, we get 
\ben
& &\langle\partial_pH(x,d_x u_\lb^\eta,0), d_x u_\lb^\eta\rangle-H(x,d_x u_\lb^\eta,0)\\
& &+\lb u_\lb^\eta\int_0^1[\partial_u H(x,d_x u_\lb^\eta,0)-\partial_u H(x,d_x u_\lb^\eta,s\lb u_\lb^\eta)]ds\\
&=&\langle\partial_pH(x,d_x u_\lb^\eta,0), d_x u_\lb^\eta\rangle+\partial_u H(x,d_x u_\lb^\eta,0)\lb u_\lb^\eta-(\alpha(x)+\eta^2)\Dt u_\lb^\eta.
\een
Multiplying both sides with $\theta_\lb^\eta$ and integrating them over $\T^n$, it yields 
\ben
& &\lb u_\lb^\eta(x_0)-\int_{\T^n}\Big(\langle\partial_pH(x,d_x u_\lb^\eta,0), d_x u_\lb^\eta\rangle-H(x,d_x u_\lb^\eta,0)\Big)\theta_\lb^\eta dx\\
&=&\int_{\T^n} \lb u_\lb^\eta\theta_\lb^\eta\Big(\int_0^1[\partial_u H(x,d_x u_\lb^\eta,0)-\partial_u H(x,d_x u_\lb^\eta,s\lb u_\lb^\eta)]ds\Big) dx
\een
which further implies
\ben
& &\bigg|\lb u_\lb^\eta(x_0)-\int_{\T^n}\Big(\langle\partial_pH(x,d_x u_\lb^\eta,0), d_x u_\lb^\eta\rangle-H(x,d_x u_\lb^\eta,0)\Big)\theta_\lb^\eta dx\bigg| \\
&=&\bigg|\lb u_\lb^\eta(x_0)-\int_{\T^n}\theta_\lb^\eta(x)dx \int_{T^*\T^n}\langle\partial_pH(x,p,0), p\rangle-H(x,p,0) d\nu_\lb^\eta(x,p)\bigg|\\
&=&\bigg|\lb u_\lb^\eta(x_0)-\int_{\T^n}\theta_\lb^\eta(x)dx \int_{T\T^n}L(x,v,0) d\mu_\lb^\eta(x,v)\bigg|\\
&\leq&\int_{\T^n} |\lb u_\lb^\eta|\int_0^1\bigg|\partial_u H(x,d_x u_\lb^\eta,0)-\partial_u H(x,d_x u_\lb^\eta,s\lb u_\lb^\eta)\bigg|ds\cdot\theta_\lb^\eta dx\\
&\leq& \int_{\T^n}( C'\eta+\lb C_p)\cdot  2\rho^* \theta_\lb^\eta(x) dx
\een
due to \eqref{eq:visc-est}. 
Taking  $\eta,\lb\rightarrow 0_+$ we derive that 
\[
\int_{T\T^n}L(x,v,0) d\mu(x,v)=0.
\]
On the other side, if we multiply \eqref{eq:hj-ad} by any given $\varphi\in C^2(\T^n,\R)$ then integrate over $\T^n$, we get 
\ben
& &\int_{\T^n}\Big(\langle\partial_pH(x,d_x u_\lb^\eta,0), d_x \varphi\rangle-(\alpha(x)+\eta^2)\Dt\varphi\Big)\theta_\lb^\eta dx\\
&=&
\lb \varphi(x_0)-\lb\int_{\T^n}\partial_uH(x,d_xu_\lb^\eta,0)\varphi\theta_\lb^\eta dx.
\een
Similarly as above that indicates
\[
\int_{T\T^n}\langle v,\nabla\varphi(x)\rangle-\alpha(x)\Dt\varphi(x) d\mu(x,v)=0
\]
as  $\eta,\lb\rightarrow 0_+$.
\end{proof}
 \begin{defn}
From now on, we denote by $\mathcal M'$ the set of all weak* limit of $\mu_\lb^\eta$ defined by \eqref{eq:mea-tran}, then this Proposition implies $\mathcal M'\subset\mathcal M$.\end{defn}
 
 \medskip
 
\section{Qualitative exploration of the viscosity solution of \eqref{eq:hj-e}}\label{s3}

\begin{lem}\label{lem:com}
For $\lb\in(0,1]$, there exists a mollifier $\zeta\in C_c^\infty(\R^n,\R)$ satisfying 
\[
\zeta\geq 0,\;\; {\rm supp}(\zeta)\subset \ol{B(0,1)},\quad|\zeta|_{L^1(\R^n,\R)}=1,
\]
such that for any suitably small $\eta>0$, the function 
\beq\label{eq:cri-sol-mol}
\om_\lb^\eta(x):=\int_{\R^n}\underbrace{\eta^{-n}\zeta(\eta^{-1}y)}_{:=\zeta^\eta(y)}u_\lb(x+y)dy,\quad x\in \T^n
\eeq
satisfies
\[
H(x,d_x\om_\lb^\eta(x),\lb u_\lb(x))\leq \alpha(x)\Dt\om_\lb^\eta(x)+S^\eta(x),\quad x\in \T^n
\]
for some continuous function $S^\eta:\T^n\rightarrow\R$. Moreover, there exists a constant $C>0$ such that 
\beq
|S^\eta(x)|\leq C,\;\;|S^\eta|_{L^\infty}\leq C\sqrt\eta,\;\;|\eta^2\Dt\om^\eta|\leq C\eta.
\eeq
\end{lem}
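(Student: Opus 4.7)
The plan is to exploit the convexity (H1) of $H$ in $p$ together with the equi-Lipschitz estimate from Proposition \ref{prop:bern} to transfer the PDE satisfied by $u_\lb$ onto its mollification $\om_\lb^\eta$, at the price of a controlled remainder $S^\eta$. First I fix a standard nonnegative $\zeta\in C_c^\infty(\R^n)$ supported in $\ol{B(0,1)}$ with unit integral, and set $\om_\lb^\eta:=\zeta^\eta*u_\lb$ as in the statement. Since $|d_xu_\lb|_{L^\infty}\leq C_{\rm Lip}$, moving one derivative onto $\zeta^\eta$ and using $|d_y\zeta^\eta|_{L^1}=O(\eta^{-1})$ gives $|D^2\om_\lb^\eta|_{L^\infty}\leq C\eta^{-1}$, which immediately yields $|\eta^2\Dt\om_\lb^\eta|\leq C\eta$.

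For the main inequality, I apply Jensen's inequality to the probability measure $\zeta^\eta(y)\,dy$. By (H1), $p\mapsto H(x,p,\lb u_\lb(x))$ is convex, and $d_x\om_\lb^\eta(x)=\int\zeta^\eta(y)d_xu_\lb(x+y)\,dy$ a.e., so
\[
H(x,d_x\om_\lb^\eta(x),\lb u_\lb(x))\leq\int\zeta^\eta(y)H(x,d_xu_\lb(x+y),\lb u_\lb(x))\,dy.
\]
Then I shift the basepoint of $H$ from $x$ to $x+y$ both in the spatial slot and in the $u$-slot. Using (H4) with $R=C_p$ for the spatial shift and (H3) combined with the Lipschitz bound for the $u$-shift $\lb u_\lb(x)\to\lb u_\lb(x+y)$, and noting $|y|\leq \eta$, this produces an inequality
\[
H(x,d_xu_\lb(x+y),\lb u_\lb(x))\leq H(x+y,d_xu_\lb(x+y),\lb u_\lb(x+y))+R_1(x,y),
\]
with $R_1$ bounded by $C\eta\bigl(H(x+y,d_xu_\lb(x+y),0)+1\bigr)$, which is uniformly bounded in $x,y$ by the Lipschitz bound on $u_\lb$.

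To recover the Laplacian term, I want to replace $H(x+y,d_xu_\lb(x+y),\lb u_\lb(x+y))$ by $\alpha(x+y)\Dt u_\lb(x+y)$ under the integral. Since $u_\lb$ is only Lipschitz, I do this rigorously by working first with the smooth approximating solution $u_\lb^\eta$ from \eqref{eq:hj-e-app}, establishing the analogous inequality for $\zeta^\eta*u_\lb^\eta$ (where both sides are classical), and then passing to the limit via Lemma \ref{lem:visc-est}. The resulting integral $\int\zeta^\eta(y)\alpha(x+y)\Dt u_\lb(x+y)\,dy$ is then split as
\[
\alpha(x)\Dt\om_\lb^\eta(x)+\int\zeta^\eta(y)[\alpha(x+y)-\alpha(x)]\Dt u_\lb(x+y)\,dy.
\]
In the second term I perform two integrations by parts — legitimate since $\alpha\in C^2$ and $\zeta^\eta$ is smooth — to move the Laplacian off $u_\lb$. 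Writing $\alpha(x+y)-\alpha(x)=y\cdot\nabla\alpha(x)+O(|y|^2)$ and using the symmetry identities $\int\zeta(z)\partial_i\zeta(z)\,dz$-type cancellations, the a priori singular factor $\eta^{-2}$ from $D^2\zeta^\eta$ is reduced to a bounded quantity, leaving an $O(1)$ pointwise remainder that only depends on $|\alpha|_{C^2}$, $C_p$ and $C_{\rm Lip}$. All these pieces are collected into $S^\eta$.

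The main obstacle is precisely this last step: transferring the Laplacian off the merely Lipschitz $u_\lb$ while keeping sharp control over the resulting terms, and correctly accounting for the symmetry cancellations to obtain both the uniform pointwise bound $|S^\eta|\leq C$ and the refined $L^\infty$ decay $|S^\eta|_{L^\infty}\leq C\sqrt{\eta}$. The latter requires a more delicate analysis: the leading-order Taylor expansion of $\alpha$ produces an antisymmetric integrand against the even mollifier $\zeta$ (up to a correction of size $\eta^{1/2}$ coming from the Lipschitz modulus of $d_xu_\lb$ in a distributional sense), which is ultimately what feeds into the convergence of the stochastic Mather measures in Proposition \ref{prop:conv-mat-meas} and drives the limiting argument in Theorem \ref{thm:1}.
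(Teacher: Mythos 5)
Your overall architecture (Jensen's inequality in $p$ for the Hamiltonian term, plus a commutator estimate for $\alpha\Dt$) matches the paper's, but two steps as you describe them do not go through. First, before you can even write $\int\zeta^\eta(y)\alpha(x+y)\Dt u_\lb(x+y)\,dy$, you need $\alpha\Dt u_\lb$ to exist as a bounded function almost everywhere; your plan to obtain the inequality for the smooth approximations and then ``pass to the limit via Lemma \ref{lem:visc-est}'' fails, because $L^\infty$ convergence of the approximations to $u_\lb$ gives no control whatsoever on their second derivatives, and the terms you must pass to the limit contain exactly those second derivatives. The paper instead proves directly that $u_\lb$ is a subsolution in the distributional (hence a.e.) sense via sup-convolutions \cite{J,JLS}, and invokes Ishii's equivalence theorem \cite{I} to upgrade the one-sided viscosity bound on $\alpha\Dt u_\lb$ coming from Proposition \ref{prop:bern} to a two-sided bound $|\alpha\Dt u_\lb|_{L^\infty}\le C_2$; this bound is indispensable for everything that follows.

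Second, your mechanism for the refined estimate $|S^\eta|_{L^\infty}\le C\sqrt\eta$ --- Taylor-expanding $\alpha$ and relying on parity cancellations against the even mollifier --- does not produce $\sqrt\eta$. After one integration by parts the commutator reads
\[
-\int\Big(\nabla\zeta^\eta(y)\,[\alpha(x+y)-\alpha(x)]+\zeta^\eta(y)\nabla\alpha(x+y)\Big)\cdot\nabla u_\lb(x+y)\,dy,
\]
and the leading contribution of the linear part $\nabla\alpha(x)\cdot y$ tested against $\nabla\zeta^\eta$ is genuinely $O(1)$: integrating it back by parts reproduces $\nabla\alpha\cdot\nabla u_\lb$ plus a term containing $\Dt u_\lb$ again, and no parity cancellation is available because $\nabla u_\lb(x+y)$ is not an even function of $y$. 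The actual source of the $\sqrt\eta$ gain --- this is the content of Lemmas 2.4 and 3.2 of \cite{MT}, which the paper simply cites --- is the degeneracy structure: since $\alpha\ge0$ is $C^2$, $\sqrt\alpha$ is Lipschitz (Theorem 5.2.3 of \cite{SV}), so $|\nabla\alpha|\le C\sqrt\alpha$; one then splits according to whether $\alpha(x)\gtrsim\eta$, where the $L^\infty$ bound above gives $|\Dt u_\lb|\le C/\alpha$ and the commutator is of size $\eta/\sqrt{\alpha}\le C\sqrt\eta$, or $\alpha(x)\lesssim\eta$, where the integrated-by-parts expression is of size $\sqrt{\alpha}+\eta\le C\sqrt\eta$. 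Without this ingredient your argument only yields the $O(1)$ bound, which is insufficient for Lemmas \ref{lem:up} and \ref{lem:low}, where $S^\eta$ is divided by $\lb$ and the $\sqrt\eta$ decay (for fixed $\lb$, letting $\eta\to0_+$ first) is what kills that term.
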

\begin{proof}
We follow the procedure in \cite{MT} but with necessary adaptions. Firstly, due to Proposition \ref{prop:bern}, there exists a constant $C_1>0$ uniform for $\lb\in(0,1]$ such that 
\[
-C_1<\alpha(x)\Dt u_\lb(x)\leq C_1,\quad\text{for any $x\in \T^n$ in the sense of viscosity}.
\]
Then due to \cite{I}, we get 
\beq\label{eq:upper-ishii}
|d_xu_\lb(x)|_{L^\infty}+|\alpha(x)\Dt u_\lb(x)|_{L^\infty}\leq C_2
\eeq
for some constant $C_2>0$. Secondly, we show that  $u_\lb(x)$ is a subsolution of \eqref{eq:hj-e} in the distributional sense, due to the ideas in \cite{J,JLS}. Precisely, let $\ol\om_\lb^\dt:=\sup_{y\in\R^n}\big(u_\lb(y)-\frac{|x-y|^2}{2\dt}\big)$ being the sup-convolution of $u_\lb$ for each $\dt>0$, then $\ol\om_\lb^\dt$ should be semi-convex and a viscosity subsolution of the following
\beq\label{eq:sup-conv}
H(x,d_x\ol\om_\lb^\dt,\lb u_\lb(x))\leq \alpha(x)\Dt\ol\om_\lb^\dt(x)+\varpi(\dt),\quad x\in\T^n
\eeq
for some modulus of continuity $\varpi:(0,+\infty)\rightarrow\R$ satisfying $\lim_{\dt\rightarrow 0_+}\varpi(\dt)=0$. Since $\ol\om_\lb^\dt$ is a semi-convex function, it is twice differentiable almost everywhere of $\T^n$. In view of \eqref{eq:upper-ishii}, $\ol\om_\lb^\dt$ is a distributional subsolution of \eqref{eq:sup-conv}, then passing to a subsequence if necessary, there hold
\[
\ol\om_\lb^\dt\rightarrow u_\lb,\quad\text{uniformly in $\T^n$},
\]
\[
d_x\ol\om_\lb^\dt \stackrel{*}{\rightharpoonup} d_xu_\lb,\quad\text{weakly in $L^\infty(\T^n,\R)$}.
\]
For any text function $\phi\in C^2(\T^n,\R)$ with $\phi\geq 0$, due to (H1), we get 
\ben
& &\int_{\T^n} H(x,d_x u_\lb, \lb u_\lb)\phi-u_\lb\Dt(\alpha(x)\phi) dx\\
&=&\lim_{\dt\rightarrow 0_+}\int_{\T^n}H(x,d_x u_\lb, \lb u_\lb)\phi+\langle\partial_p H(x,d_x u_\lb, \lb u_\lb),d_x\ol\om_\lb^\dt-d_xu_\lb\rangle\phi-\ol\om_\lb^\dt\Dt(\alpha(x)\phi) dx\\
&\leq& \lim_{\dt\rightarrow 0_+}\int_{\T^n}H(x,d_x \ol\om_\lb^\dt, \lb u_\lb)\phi-\ol\om_\lb^\dt\Dt(\alpha(x)\phi) dx\leq  \lim_{\dt\rightarrow 0_+}\int_{\T^n}\varpi(\dt)\phi dx=0,
\een
which implies $u_\lb$ is a subsolution of \eqref{eq:hj-e} in the distributional sense. Notice that 
\ben
& &H(x,d_x\om_\lb^\eta(x),\lb u_\lb(x))\\
&=&\alpha(x)\Dt\om_\lb^\eta(x)+\underbrace{\int_{\R^n}\alpha(x+y)\Dt u_\lb(x+y)\zeta^\eta(y)dy-\alpha(x)\Dt \om_\lb^\eta(x)}_{R_2^\eta(x)}+\\
& &\underbrace{H(x,d_x\om_\lb^\eta(x),\lb u_\lb(x))-\int_{\R^n}H(x+y ,d_xu_\lb(x+y),\lb u_\lb(x+y))\zeta^\eta(y) dy}_{R_1^\eta(x)}.
\een
Due to Lemma 3.2 and Lemma 2.4 of \cite{MT}, there holds
\[
|R_2^\eta(x)|\leq C,\quad |R_2^\eta(x)|\leq C\sqrt\eta
\]
for some constant $C>0$. On the other side, due to (H1) and the {\it Jensen's Inequality}, 
\[
R_1^\eta(x)\leq \int_{\R^n} \Big(H(x,d_xu_\lb(x+y),\lb u_\lb(x))-H(x+y ,d_xu_\lb(x+y),\lb u_\lb(x+y))\Big)\zeta^\eta(y) dy
\]
of which for a.e. $y\in B(x,\eta)$,
\ben
& &|H(x,d_xu_\lb(x+y),\lb u_\lb(x))-H(x+y ,d_xu_\lb(x+y),\lb u_\lb(x+y))|\\
&\leq&|H(x+y,d_x u_\lb(x+y),\lb u_\lb(x+y))-H(x+y ,d_xu_\lb(x+y),\lb u_\lb(x))|+\\
& & |H(x+y,d_xu_\lb(x+y),\lb u_\lb(x))-H(x ,d_xu_\lb(x+y),\lb u_\lb(x))|\\
&\leq&\lb \rho^*C_2\eta+ \max_{z\in \T^n}|\partial_x H(z,d_xu_\lb(x+y),\lb u_\lb(x))|\eta\\
&\leq &C_3\eta
\een
in view of \eqref{eq:upper-ishii} for some constant $C_3>0$. Therefore, $|R_1^\eta(x)|\leq C_3\eta$ and $S^\eta(x):=R_1\eta(x)+R_2^\eta(x)$ satisfies the assertion.
\end{proof}

\begin{rmk}\label{rmk:com}
\begin{itemize}
\item[(1)] As an individual interest, the proof of Lemma \ref{lem:com} actually indicates the following byproduct: 

 {\it Any continuous viscosity subsolution of \eqref{eq:hj-e} has to be a continuous subsolution of \eqref{eq:hj-e} in the almost everywhere sense, vice versa.}
 
 Here is the reason: On one side, a viscosity subsolution of \eqref{eq:hj-e} has to be a subsolution in the distributional sense of \eqref{eq:hj-e} can be concluded from above proof, then has to be a subsolution of \eqref{eq:hj-e} in the almost everywhere sense further (due to \eqref{eq:upper-ishii}). On the other side,  suppose $\om_\lb$ is a subsolution of \eqref{eq:hj-e} in the almost everywhere sense, by \eqref{eq:cri-sol-mol} we can get a smooth modification 
 $\om_\lb^\eta$ of $\om_\lb$ for any $\eta>0$. In view of Lemma \ref{lem:com}, $\lim_{\eta\rightarrow 0_+}|\om_\lb^\eta-\om_\lb|=0$ and the stability of viscosity solutions (see \cite{CIL} for instance), $\om_\lb$ has to be a viscosity subsolution of \eqref{eq:hj-e}.
 
 \item[(2)] Notice that the estimate in Lemma \ref{lem:com} also applies to the case $\lb=0$ (although this case has been proved in \cite{MT}), i.e. for any viscosity solution $\om(x)$ of \eqref{eq:hj-e-cri}, the associated $\om^\eta(x)$ given by \eqref{eq:cri-sol-mol} satisfies 
 \[
H(x,d_x\om^\eta(x),0)\leq \alpha(x)\Dt\om_\lb^\eta(x)+S^\eta(x),\quad x\in \T^n
\]
with $
|S^\eta(x)|\leq C,\;\;|S^\eta|_{L^\infty}\leq C\sqrt\eta,\;\;|\eta^2\Dt\om^\eta|\leq C\eta
$ for some constant $C>0$.
 \end{itemize} 
\end{rmk}

\begin{lem}[upper estimate]\label{lem:up}
For any subsequence $\{\lb_i\}_{i\in\N}$ converging to $0$ such that $u_{\lb_i}$ uniformly converges to a solution $\om$ of \eqref{eq:hj-e-cri}, there holds
\beq\label{eq:up}
-\int_{T\T^n}\om (x)\partial_uL(x,v,0) d\mu\leq 0,\quad \forall \,\mu\in\mathcal M'.
\eeq
\end{lem}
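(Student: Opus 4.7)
The plan is to test the approximate subsolution inequality for a mollification $\om^\eta$ of $\om$ against the adjoint density $\theta_\lb^\eta$, and to pass to the limit along a subsequence realising the given $\mu$. Fix $\mu\in\cM'$; by the definition of $\cM'$ given in Section~\ref{s2}, $\mu$ is a weak$^*$ limit of $\mu_{\lb_k}^{\eta_k}$ for some vanishing $(\lb_k,\eta_k)$, and a diagonal extraction combined with the hypothesis $u_{\lb_i}\to\om$ lets us arrange that simultaneously $u_{\lb_k}\to\om$ uniformly and $\sqrt{\eta_k}/\lb_k\to 0$ (in particular $\eta_k/\lb_k\to 0$). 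Let $\om^\eta$ denote the convolution mollification~\eqref{eq:cri-sol-mol} of $\om$; by Remark~\ref{rmk:com}(2), $H(x,d_x\om^\eta,0)\leq\alpha(x)\Delta\om^\eta+S^\eta(x)$ with $|S^\eta|_{L^\infty}\leq C\sqrt\eta$ and $\eta^2|\Delta\om^\eta|\leq C\eta$.

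First I would combine this subsolution estimate with the convexity of $H(x,\cdot,0)$ (assumption~(H1)) at the slope $d_xu_\lb^\eta$ to obtain the pointwise inequality
\[
H(x,d_xu_\lb^\eta,0)+\bigl\langle\partial_pH(x,d_xu_\lb^\eta,0),\,d_x\om^\eta-d_xu_\lb^\eta\bigr\rangle\leq\alpha(x)\Delta\om^\eta+S^\eta,
\]
multiply through by $\theta_\lb^\eta\geq 0$, and integrate over $\T^n$. The two bilinear integrals of the form $\int\langle\partial_pH,d_x\phi\rangle\theta_\lb^\eta\,dx$ that appear are eliminated by testing the adjoint equation~\eqref{eq:hj-ad} against $\phi=\om^\eta$ and $\phi=u_\lb^\eta$, yielding after integration by parts
\[
\int_{\T^n}\langle\partial_pH,d_x\phi\rangle\,\theta_\lb^\eta\,dx=\int_{\T^n}(\alpha+\eta^2)\Delta\phi\cdot\theta_\lb^\eta\,dx+\lb\phi(x_0)-\lb\int_{\T^n}\phi\,\partial_uH\,\theta_\lb^\eta\,dx.
\]
The remaining piece $\int[H(x,d_xu_\lb^\eta,0)-(\alpha+\eta^2)\Delta u_\lb^\eta]\theta_\lb^\eta\,dx$ is rewritten via~\eqref{eq:hj-e-app} and the fundamental theorem of calculus as $-\lb\int u_\lb^\eta\,\partial_uH(x,d_xu_\lb^\eta,0)\,\theta_\lb^\eta\,dx$ plus a remainder of order $\lb\omega(\lb)$ coming from the continuity of $\partial_uH$ in its third argument. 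After cancellation of the symmetric $u_\lb^\eta\partial_uH\theta_\lb^\eta$ contributions, the integrated inequality collapses to
\[
\lb\bigl[\om^\eta(x_0)-u_\lb^\eta(x_0)\bigr]-\lb\int_{\T^n}\om^\eta\,\partial_uH(x,d_xu_\lb^\eta,0)\,\theta_\lb^\eta\,dx+\eta^2\int\Delta\om^\eta\,\theta_\lb^\eta\,dx+o(\lb)\leq\int S^\eta\,\theta_\lb^\eta\,dx.
\]

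Dividing by $\lb$ and taking $k\to\infty$ along the selected sequence, the boundary gap $\om^\eta(x_0)-u_\lb^\eta(x_0)$ vanishes by combining $u_{\lb_k}\to\om$ uniformly, the mollification bound $|\om-\om^\eta|_{L^\infty}=O(\eta)$ (using the Lipschitz control on $\om$ from Proposition~\ref{prop:bern}), and Lemma~\ref{lem:visc-est} (which gives $|u_{\lb_k}-u_{\lb_k}^{\eta_k}|_{L^\infty}\leq C'\eta_k/\lb_k\to 0$). The right-hand side $\lb^{-1}\int S^\eta\theta_\lb^\eta\,dx$ vanishes by $|S^\eta|_{L^\infty}\leq C\sqrt\eta$ and $\sqrt{\eta_k}/\lb_k\to 0$; the term $(\eta^2/\lb)\int\Delta\om^\eta\theta_\lb^\eta\,dx$ vanishes by $\eta^2|\Delta\om^\eta|\leq C\eta$. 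What remains is $\int\om^\eta\partial_uH(x,d_xu_\lb^\eta,0)\theta_\lb^\eta\,dx$, which by the definition of $\nu_\lb^\eta$ and the Legendre identity $\partial_uH(x,\partial_vL(x,v,0),0)=-\partial_uL(x,v,0)$ converges to $-\bigl(\int\theta\,dx\bigr)_{\rm lim}\int_{T\T^n}\om(x)\,\partial_uL(x,v,0)\,d\mu$. The factor $(\int\theta\,dx)_{\rm lim}\in[1/\rho^*,1/\rho_*]$ is strictly positive by~\eqref{eq:meas-est} together with~(H3), so dividing through delivers~\eqref{eq:up}.

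The principal technical obstacle is arranging the diagonal scaling: one must choose $\eta_k$ decaying faster than $\lb_k^2$ so that $\sqrt{\eta_k}/\lb_k\to 0$, yet carefully enough that the weak$^*$ convergence $\mu_{\lb_k}^{\eta_k}\to\mu$ persists along the refined sequence. A subtler point is the identification of the weak$^*$ limit of the density $\om^\eta(x)\partial_uH(x,d_xu_\lb^\eta,0)\theta_\lb^\eta(x)\,dx$ with the integrand against $\nu=\cL_*\mu$; this uses uniform convergence $\om^\eta\to\om$ together with the uniform boundedness of $d_xu_\lb^\eta$ furnished by Proposition~\ref{prop:bern}, which confines the relevant integrand to a compact subset of $T^*\T^n$ on which $\partial_uH(\cdot,\cdot,0)$ is continuous and bounded.
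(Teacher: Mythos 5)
There is a genuine gap here — in fact two, and the more serious one is a sign reversal that means your argument, carried out correctly, proves the \emph{opposite} inequality. Expanding $H(x,\cdot,0)$ at the slope $d_xu_\lb^\eta$ and evaluating at $d_x\om^\eta$ gives a \emph{lower} bound for $H(x,d_x\om^\eta,0)$; combined with the approximate subsolution property of $\om^\eta$ this is exactly the computation of Lemma~\ref{lem:low}, and after testing against $\theta_\lb^\eta$, cancelling via the adjoint equation and dividing by $\lb$, the surviving inequality is
\[
\om^\eta(x_0)-u_\lb^\eta(x_0)-\int_{\T^n}\om^\eta\,\partial_uH(x,d_xu_\lb^\eta,0)\,\theta_\lb^\eta\,dx\;\leq\;o(1).
\]
Since $\int\om^\eta\partial_uH\,\theta_\lb^\eta\,dx=-\bigl(\int\theta_\lb^\eta\bigr)\int\om^\eta\,\partial_uL(x,v,0)\,d\mu_\lb^\eta$ by the Legendre identity you quote, the limit (assuming the boundary gap vanishes) is $c\int\om\,\partial_uL\,d\mu\leq 0$ with $c>0$, i.e.\ $\int\om\,\partial_uL\,d\mu\leq 0$ — the reverse of \eqref{eq:up}, which asserts $\int\om\,\partial_uL\,d\mu\geq 0$. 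The convexity direction cannot be flipped, so this route structurally cannot yield the upper estimate. The paper's proof is different in an essential way: it never touches $\theta_\lb^\eta$. It uses the Fenchel inequality $L(x,v,\lb_iu_{\lb_i})\geq\langle v,d_x\psi_i^\eta\rangle-H(x,d_x\psi_i^\eta,\lb_iu_{\lb_i})$ with $\psi_i^\eta$ the mollification of $u_{\lb_i}$ itself (so the subsolution property being exploited is that of $u_{\lb_i}$, not of $\om$), writes $\lb_iu_{\lb_i}\partial_uL(x,v,0)=L(x,v,\lb_iu_{\lb_i})-L(x,v,0)-\lb_iu_{\lb_i}Q_{\lb_i}$, and integrates the resulting pointwise inequality directly against the limit measure $\mu$, killing the transport term by the closedness property $\int\langle v,\nabla\psi_i^\eta\rangle-\alpha\Dt\psi_i^\eta\,d\mu=0$ and the $L$-term by $\int L(x,v,0)\,d\mu=0$.

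The second problem is the diagonal extraction. A given $\mu\in\cM'$ is by definition a weak$^*$ limit of $\mu_{\lb_k}^{\eta_k}$ along \emph{some} vanishing sequence $(\lb_k,\eta_k)$, and there is no reason this sequence of $\lb$'s meets the given sequence $\{\lb_i\}$ along which $u_{\lb_i}\to\om$; along the sequence realising $\mu$, the solutions $u_{\lb_k}$ may converge to an entirely different accumulation point. So you cannot ``arrange that simultaneously $u_{\lb_k}\to\om$ uniformly and $\sqrt{\eta_k}/\lb_k\to 0$''; nor is there any guarantee that $\mu$ is realised by a scaling with $\sqrt{\eta_k}/\lb_k\to 0$ at all. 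The strength of the lemma is precisely that \eqref{eq:up} holds for \emph{every} $\mu\in\cM'$, including those produced by sequences unrelated to $\{\lb_i\}$, and the paper's proof achieves this because the only inputs concerning $\mu$ are its two defining properties as a stochastic Mather measure — the index $i$ and the sequence generating $\mu$ are completely decoupled. Your scheme ties them together and therefore could at best address the single measure generated along the given subsequence.
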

\begin{proof} 
Due to Lemma \ref{lem:com}, we denote  
\[
\psi_i^\eta(x):=\int_{\R^n}\eta^{-n}\zeta(\eta^{-1}y)u_{\lb_i}(x+y)dy,\quad x\in M.
\]
 By the convexity of $H$, we have 
\ben
\partial_uL(x,v,0)\lb_i u_{\lb_i}&=&L(x,v,\lb_i u_{\lb_i})-L(x,v,0)-\lb_i u_{\lb_i} Q_{\lb_i}(x,v)\\
&\geq& \langle v,d_x\psi_i^\eta\rangle-H(x, d_x\psi_i^\eta,\lb_i u_{\lb_i})-L(x,v,0)-\lb_i u_{\lb_i} Q_{\lb_i}(x,v)\\
&\geq&\langle v,d_x\psi_i^\eta\rangle-\alpha(x)\Dt \psi_i^\eta(x)-S^\eta(x)-L(x,v,0)-\lb_i u_{\lb_i} Q_{\lb_i}(x,v)
\een
with 
\[
Q_{\lb_i}(x,v):=\int_0^1 \partial_u L\big(x,v,\lb_i(1-\theta) u_{\lb_i}(x)\big)d\theta-\partial_uL(x,v,0).
\]
Integrating both sides of previous inequality by any $\mu\in\mathcal M'$, we get 
\[
\int_{T\T^n} \partial_uL(x,v,0) u_{\lb_i} d\mu\geq -\frac1{\lb_i}\int_{T\T^n}S^\eta d\mu-\int_{T\T^n}u_{\lb_i} Q_{\lb_i}(x,v)d\mu.
\]
Letting $\eta\rightarrow 0_+$ there holds 
\[
-\int_{T\T^n} \partial_uL(x,v,0) u_{\lb_i} d\mu\leq\int_{T\T^n}u_{\lb_i} Q_{\lb_i}(x,v)d\mu .
\]
then taking $i\rightarrow+\infty$ and using the {\it Lebesgue Dominated Convergence Theorem} we get the desired conclusion.
\end{proof}

\begin{lem}[lower estimate]\label{lem:low}
Suppose $\om$ is a viscosity solution of \eqref{eq:hj-e-cri}and $\om^\eta$ is the function given by \eqref{eq:cri-sol-mol}, then 
 for any solution $u_\lb$ of \eqref{eq:hj-e} and $\theta_\lb$ of \eqref{eq:hj-ad} there holds 
\be
u_\lb^\eta(x)-\om^\eta(x)&\geq &-\int_{\T^n}\om^\eta \partial_u H(y, d_xu_\lb^\eta(y),0)\theta_\lb^\eta dy-C\frac\eta\lb \int_{\T^n} \theta_\lb^\eta(y)dy-\frac1\lb\int_{\T^n}S^\eta\theta_\lb^\eta dy\\
& &-\int_{\T^n}  u_\lb^\eta(x) \Big(\int_0^1\Big[\partial_u H(x,d_xu_\lb^\eta,(1-\vartheta)\lb u_\lb^\eta)-\partial_u H(x,d_xu_\lb^\eta,0)\Big]d\vartheta \Big)\theta_\lb^\eta(x)dx\nonumber
\ee
\end{lem}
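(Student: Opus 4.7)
\medskip

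\noindent\textbf{Proof plan for Lemma \ref{lem:low}.}
The strategy is to use the adjoint equation \eqref{eq:hj-ad} as a duality device: we multiply it against the two candidate solutions $u_\lb^\eta$ and $\om^\eta$, subtract the resulting identities, and then exploit convexity of $H$ in $p$ together with the approximate subsolution property of $\om^\eta$ furnished by Lemma \ref{lem:com} (cf.\ Remark \ref{rmk:com}(2)).

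First I would record the ``weak formulation'' of \eqref{eq:hj-ad}. For any $\varphi\in C^2(\T^n,\R)$, integration by parts yields
\[
\lb\varphi(x_0)=\int_{\T^n}\Big[\lb\,\partial_uH(x,d_xu_\lb^\eta,0)\,\varphi+\langle \partial_pH(x,d_xu_\lb^\eta,0),d_x\varphi\rangle-(\alpha(x)+\eta^2)\Dt\varphi\Big]\theta_\lb^\eta\,dx.
\]
Applying this once to $\varphi=u_\lb^\eta$ and using \eqref{eq:hj-e-app} to substitute $(\alpha+\eta^2)\Dt u_\lb^\eta=H(x,d_xu_\lb^\eta,\lb u_\lb^\eta)$, and a second time to $\varphi=\om^\eta$, and then subtracting the two identities, I obtain
\[
\lb(u_\lb^\eta(x_0)-\om^\eta(x_0))=\int_{\T^n}\Big[\lb\partial_uH(x,d_xu_\lb^\eta,0)(u_\lb^\eta-\om^\eta)+\langle\partial_pH(x,d_xu_\lb^\eta,0),d_xu_\lb^\eta-d_x\om^\eta\rangle
\]
\[
-H(x,d_xu_\lb^\eta,\lb u_\lb^\eta)+(\alpha+\eta^2)\Dt\om^\eta\Big]\theta_\lb^\eta\,dx.
\]

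Next I would invoke convexity of $H(x,\cdot,0)$ to estimate
\[
\langle\partial_pH(x,d_xu_\lb^\eta,0),d_xu_\lb^\eta-d_x\om^\eta\rangle\geq H(x,d_xu_\lb^\eta,0)-H(x,d_x\om^\eta,0),
\]
and combine this with the approximate subsolution property $H(x,d_x\om^\eta,0)\leq \alpha(x)\Dt\om^\eta+S^\eta(x)$ from Lemma \ref{lem:com}/Remark \ref{rmk:com}(2), which bounds the ``$\alpha\Dt\om^\eta-H(x,d_x\om^\eta,0)$'' contribution from below by $-S^\eta$. The leftover $\eta^2\Dt\om^\eta$ is controlled by $|\eta^2\Dt\om^\eta|\leq C\eta$, again from Remark \ref{rmk:com}(2). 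Finally, the $u$-dependence of $H$ is unwound by the fundamental theorem of calculus
\[
H(x,d_xu_\lb^\eta,0)-H(x,d_xu_\lb^\eta,\lb u_\lb^\eta)=-\lb u_\lb^\eta\int_0^1\partial_uH(x,d_xu_\lb^\eta,(1-\vartheta)\lb u_\lb^\eta)\,d\vartheta.
\]

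Substituting these three ingredients back, dividing by $\lb>0$, and grouping the first and the $\partial_uH(\cdots,0)u_\lb^\eta\theta_\lb^\eta$ terms with the fundamental theorem of calculus output (so that the $u_\lb^\eta$ factors combine into the final commutator $\int_0^1[\partial_uH(\cdots,(1-\vartheta)\lb u_\lb^\eta)-\partial_uH(\cdots,0)]\,d\vartheta$) yields precisely the asserted inequality, with the $-\int\om^\eta\partial_uH(\cdots,0)\theta_\lb^\eta$ term surviving from the $(u_\lb^\eta-\om^\eta)$ factor. The main technical care is bookkeeping: tracking signs, making sure the $\eta^2\Dt\om^\eta$ remainder is absorbed into the $C\eta/\lb$ term via Remark \ref{rmk:com}(2), and ensuring the algebraic rearrangement that turns the ``$\partial_uH$ at $0$'' term plus the calculus integral into the commutator displayed in the statement is done correctly; no nonlinear obstruction arises, since convexity of $H$ in $p$ handles the only place where an inequality (rather than an identity) is needed.
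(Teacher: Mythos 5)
Your proposal is correct and follows essentially the same route as the paper: the same three ingredients (the approximate subsolution inequality for $\om^\eta$ from Remark \ref{rmk:com}(2), convexity of $H(x,\cdot,0)$ to linearize the $p$-difference, and the fundamental theorem of calculus in $u$ to produce the commutator term) are combined with the adjoint duality $\lb\varphi(x_0)=\int_{\T^n}[\lb\partial_uH\varphi+\langle\partial_pH,d_x\varphi\rangle-(\alpha+\eta^2)\Dt\varphi]\theta_\lb^\eta\,dx$. The only difference is cosmetic: you test the adjoint equation against $u_\lb^\eta$ and $\om^\eta$ first and then estimate inside the integral, whereas the paper first derives the pointwise differential inequality for $u_\lb^\eta-\om^\eta$ and then integrates against $\theta_\lb^\eta\,dx$.
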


\begin{proof}
In view of item (2) of Remark \ref{rmk:com}, we have 
\[
H(x, d_x \om^\eta,0)\leq (\alpha(x)+\eta^2)\Dt \om^\eta(x)+C\eta+S^\eta(x).
\]
Subtracting \eqref{eq:hj-e-app} by this inequality, we get 
\ben
(\alpha(x)+\eta^2)\Delta(u_\lb^\eta-\om^\eta)&\leq&H(x,d_xu_\lb^\eta,\lb u_\lb^\eta)-H(x,d_x\om^\eta,0)+C\eta+S^\eta(x)\\
&=&H(x,d_xu_\lb^\eta,\lb u_\lb^\eta)-H(x,d_xu_\lb^\eta,0)+H(x,d_xu_\lb^\eta,0)-H(x,d_x\om^\eta,0)\\
& &+C\eta+S^\eta(x)\\
&\leq& \lb u_\lb^\eta\int_0^1\partial_u H(x,d_xu_\lb^\eta,(1-\vartheta)\lb u_\lb^\eta)d\vartheta\\
& &+\langle\partial_p H(x,d_xu_\lb^\eta,0),d_x(u_\lb^\eta-\om^\eta)\rangle+C\eta+S^\eta(x)\\
&=&  \lb u_\lb^\eta \partial_u H(x,d_xu_\lb^\eta,0)+\langle\partial_p H(x,d_xu_\lb^\eta,0),d_x(u_\lb^\eta-\om^\eta)\rangle+C\eta+S^\eta(x)\\
& &+ \lb u_\lb^\eta \int_0^1\big[\partial_u H(x,d_xu_\lb^\eta,(1-\vartheta)\lb u_\lb^\eta)-\partial_u H(x,d_xu_\lb^\eta,0)\big]d\vartheta
\een
which indicates
\ben
\lb\om^\eta(x)\partial_u H(x,d_xu_\lb^\eta,0)&\geq& (a(x)+\eta^2)\Delta(u_\lb^\eta-\om^\eta)-\langle\partial_p H(x,d_xu_\lb^\eta,0),d_x(u_\lb^\eta-\om^\eta)\rangle\\
& &-\lb (u_\lb^\eta-\om^\eta)\partial_u H(x,d_xu_\lb^\eta,0)-C\eta-S^\eta(x)\\
& &- \lb u_\lb^\eta \int_0^1\big[\partial_u H(x,d_xu_\lb^\eta,(1-\vartheta)\lb u_\lb^\eta)-\partial_u H(x,d_xu_\lb^\eta,0)\big]d\vartheta
\een

Integrating both sides with respect to  the measure $\theta_\lb^\eta(x)dx$, we get  
\ben
& &\int_{\T^n} \om^\eta(x)\partial_u H(x,d_xu_\lb^\eta,0)\theta_\lb^\eta(x) dx\\
&\geq&\big(\om(x_0)-u_\lb(x_0)\big)-C\frac\eta\lb\int_{\T^n} \theta_\lb^\eta(x)dx-\frac1\lb\int_{\T^n}S^\eta(x)\theta_\lb^\eta(x)dx\\
& &-\int_{\T^n}  u_\lb^\eta(x) \Big(\int_0^1\Big[\partial_u H(x,d_xu_\lb^\eta,(1-\vartheta)\lb u_\lb^\eta)-\partial_u H(x,d_xu_\lb^\eta,0)\Big]d\vartheta \Big)\theta_\lb^\eta(x)dx
\een
 Since $x_0\in \T^n$ is freely chosen, rearrange this inequality we get the assertion.
\end{proof}

\bigskip


{\it Proof of Theorem \ref{thm:1}:}  Due to Lemma \ref{lem:up}, any uniform limit of $u_\lb$ along subsequences belongs to $\cS'$, so $\limsup_{\lb\rightarrow 0_+}u_\lb(x)\leq \sup_{\om\in\cS'}\om(x)$; On othe other side, for any $\om\in\cS'$, Lemma \ref{lem:low} indicates 
\ben
& &\liminf_{\lb\rightarrow 0_+}u_\lb(x)\\
&\geq& \om(x)+\liminf_{\lb\rightarrow 0_+}\liminf_{\eta\rightarrow 0_+}\bigg(\int_{\T^n}\theta_\lb^\eta(y)dy\cdot \int_{T\T^n}\om^\eta(y) \partial_u L(y,v,0) d\mu_\lb^\eta(y,v)\\
& &-\int_{\T^n}  |u_\lb^\eta(x)|\cdot\Big|\int_0^1\Big[\partial_u H(x,d_xu_\lb^\eta,(1-\vartheta)\lb u_\lb^\eta)-\partial_u H(x,d_xu_\lb^\eta,0)\Big]d\vartheta \Big|\theta_\lb^\eta(x)dx\bigg)\\
&\geq &\om(x)
\een
since any weak* limit of $\mu_\lb^\eta$ ia contained in $\mathcal M'$. So we get $\sup_{\om\in\cS'}\om(x)\leq \liminf_{\lb\rightarrow 0_+}u_\lb(x)$ and finish the proof. \medskip

\begin{rmk}\label{rmk:main}
\begin{itemize}
\item If additionally we assume 

\quad (H6) For any $R>0$, there exists $B_R>0$ such that 
\[
|\partial_uH(x,p,u)-\partial_u H(x,p,0)|\leq B_R |u|,\quad\forall (x,p)\in T^*\T^n, |u|\leq R.
\]
then the conclusion of Lemma \ref{lem:up} can be generalized to 
\[
-\int_{T\T^n}\om(x)\partial_u L(x,v,0)d\mu\leq 0,\quad\forall \mu\in\mathcal M
\]
for any accumulating function $\om$ of the family $\{u_\lb\}$ as $\lb\rightarrow 0_+$. Furthermore, for Hamiltonians satisfying (H1)-(H6) we can prove 
\beq\label{eq:substitute}\tag{*}
\lim_{\lb\rightarrow 0_+}u_\lb(x)=\sup_{\om\in\cS}\om(x)
\eeq
with  $\cS:=\{\om \text{ is a viscosity solution of }\eqref{eq:hj-e-cri}| \int_{T\T^n}\partial_{u}L(x,v,0)\om d\mu\geq 0,\;\forall \,\mu\in\mathcal M\}$. As already addressed in \cite{MT},  deeper properties about stochastic Mather measures (e.g. Lipschitz graph property and compactness of $\mathcal M$) are still unknown, but important to explore. In view of such a situation, additional assumption like (H6) is inevitable to ensure \eqref{eq:substitute} hold.

\item As is shown in \cite{WYZ}, we can indeed get different solutions of \eqref{eq:hj-e-cri} by choosing different $\partial_u L(x,v,0)$ functions then get different limit of associated $\{u_\lb\}_{\lb>0}$. To illustrate the dynamical differences between these different limit solutions would be also very meaningful in the furture study.
\end{itemize}
\end{rmk}

\bigskip

\appendix

\section{Adjoint equation}\label{a1}

For $\lb>0$, suppose $u:\T^n\rightarrow\R$ is the viscosity solution of 
\[
 \lb\beta(x)u+\langle V(x), d_x u\rangle=(\alpha(x)+\eta^2)\Dt u+\lb f(x),\quad x\in \T^n, 
\]
then $u\geq 0$ as long as $C(\T^n,\R)\ni f,\beta \geq 0$ (due to the comparison principle). As its adjoint equation, there holds
\[
\lb\beta\theta-{\rm div}\big(V(x)\theta\big)=\Dt\big((\alpha+\eta^2)\theta\big)+ \lb \dt_{x_0}
\]
for some $x_0\in \T^n$. As we can see, 
\ben
\int_{\T^n} \lb f\theta dx &=&\int_{\T^n}\Big(\lb \beta(x)u+\langle V(x), d_x u\rangle-(\alpha(x)+\eta^2)\Dt u\Big)\theta(x)dx\\
&=&\int_{\T^n}\Big(\lb \beta\theta-{\rm div}\big(V(x)\theta\big)-\Dt\big((\alpha+\eta^2)\theta\big)\Big)u dx\\
&=&\int_{\T^n}\lb \beta \dt_{x_0}u dx=\lb\beta(x_0)u(x_0)\geq 0
\een
for any $f,\beta\geq 0$. Consequently, $\theta\geq 0$ on $\T^n$ and 
\[
\int_{\T^n} \beta\theta dx=\int_{\T^n}\dt_{x_0}dx=1.
\]
Furthermore, if $\beta>0$, then 
\[
\frac{1}{\max_{\T^n}\beta}\leq\int_{\T^n}\theta dx\leq \frac{1}{\min_{\T^n}\beta}.
\]
Applying previous procedure to \eqref{eq:hj-ad} by taking 
\[
\beta(x)= \partial_u H(x,d_x u_\lb^\eta,0),\quad V(x)=\partial_p H(x,d_xu_\lb^\eta,0),\quad
\]

we instantly get \eqref{eq:meas-est}.\medskip

\noindent{\it Proof of Lemma \ref{lem:visc-est}:} Differentiating both sides of \eqref{eq:hj-e-app} by $x$, then we get 
\ben
& &\partial_x H(x, d_xu_\lb^\eta,\lb u_\lb^\eta)+\partial_p H(x, d_xu_\lb^\eta,\lb u_\lb^\eta)\cdot D^2u_\lb^\eta(x)+\lb \partial_u H(x, d_xu_\lb^\eta,\lb u_\lb^\eta)d_xu_\lb^\eta(x)\\
&=&(\alpha+\eta^2)\Dt (d_xu_\lb^\eta)+d_x\alpha(x)\Dt u_\lb^\eta.
\een
Multiplying previous equality by $d_xu_\lb^\eta$, then we get 
\be\label{eq:2nd-est}
& &\langle\partial_x H(x, d_xu_\lb^\eta,\lb u_\lb^\eta), d_xu_\lb^\eta\rangle+\langle\partial_p H, d_x\psi(x)\rangle+2\lb \partial_u H(x, d_xu_\lb^\eta,\lb u_\lb^\eta)\psi(x)\\
&=&(\alpha+\eta^2)(\Dt\psi-|D^2u_\lb^\eta|^2)+\langle d_x\alpha, d_xu_\lb^\eta\rangle\Dt u_\lb^\eta\nonumber
\ee
where $\psi(x):=\dfrac{|d_xu_\lb^\eta(x)|^2}{2}$. Since 
\beq\label{eq:uni-c1}
|u_\lb^\eta|+|d_xu_\lb^\eta|_{L^\infty}\leq C_4,\quad\forall \lb,\;\eta\in(0,1]
\eeq
for some constant $C_4>0$, there exists a constant $C_5>0$ such that 
\[
|\langle\partial_x H(x, d_xu_\lb^\eta,\lb u_\lb^\eta), d_xu_\lb^\eta\rangle|\leq C_5.
\]
On the other side, 
\be
|\langle d_x\alpha, d_xu_\lb^\eta\rangle \Dt u_\lb^\eta|&\leq& | d_x\alpha |\cdot|d_xu_\lb^\eta|\cdot|\Dt u_\lb^\eta|\\
&\leq& C_4 | d_x\alpha |\cdot|\Dt u_\lb^\eta|=\frac {C_4}\dt\cdot  \dt  | d_x\alpha |\cdot|\Dt u_\lb^\eta|\nonumber\\
&\leq&\frac12 \Big(\frac{C_4^2}{\dt^2}+\dt^2  | d_x\alpha |^2\cdot|\Dt u_\lb^\eta|^2\Big)\nonumber\\
&\leq& \frac12 \Big(\frac{C_4^2}{\dt^2}+\dt^2 C_6\alpha(x)|D^2 u_\lb^\eta|^2\Big)\nonumber
\ee
for some constant $C_6>0$, since $\alpha\geq 0$ then $\sqrt{\alpha}\in {\rm Lip}(\T^n,\R)$ in view of Theorem 5.2.3 of \cite{SV}. Furthermore, previous inequality leads to 
\[
|\langle d_x\alpha, d_xu_\lb^\eta\rangle \Dt u_\lb^\eta|\leq \frac{C_4^2C_6}{2}+\frac12\alpha(x)|D^2 u_\lb^\eta|^2
\]
by taking $\dt^2=1/C_6$. Accordingly, \eqref{eq:2nd-est} implies 
\be\label{eq:2nd-est-2}
& &\langle\partial_p H, d_x\psi(x)\rangle+2\lb \partial_u H(x, d_xu_\lb^\eta,\lb u_\lb^\eta)\psi(x)
-(\alpha+\eta^2)\Dt\psi +\frac{\alpha+\eta^2}{2} |D^2u_\lb^\eta|^2\\
&\leq& C_7:=C_5+\frac{C_4^2C_6}{2}.\nonumber
\ee
Suppose $\theta_\lb^\eta(x)$ is now the solution of the following adjoint equation 
\be\label{eq:ad-app-2}
2\lb \partial_u H(x,d_xu_\lb^\eta,\lb u_\lb^\eta)\theta_\lb^\eta-{\rm div}\Big(\partial_p H(x,d_xu_\lb^\eta,\lb u_\lb^\eta)\theta_\lb^\eta\Big)=\Dt\Big((\alpha+\eta^2)\theta_\lb^\eta\Big)+2\lb\dt_{x_0}
\ee

Integrating both sides of \eqref{eq:2nd-est-2} by $\theta_\lb^\eta(x)dx$ we get 
\ben
\int_{\T^n} (\alpha+\eta^2) |D^2u_\lb^\eta|^2\theta_\lb^\eta(x)dx&\leq& 2\lb |\psi(x_0)|+2C_4\int_{\T^n}\theta_\lb^\eta d x\\
&\leq&\lb C_1^2+\frac{2C_7}{\min_{x\in \T^n}\partial_u H(x,d_xu_\lb^\eta,\lb u_\lb^\eta)}
\een
which further indicates 
\[
\int_{\T^n}  |D^2u_\lb^\eta|^2\theta_\lb^\eta(x)dx\leq \frac{C_8}{\eta^2},\quad \forall\;\eta\in(0,1]
\]
for some constsnt $C_8>0$ due to \eqref{eq:uni-c1}.

Secondly, since $u_\lb^\eta(x)$ is smooth of $\eta\in(0,1]$, so we can take the derivative of \eqref{eq:hj-e-app} with respect to  $\eta$, such that 
\[
\langle \partial_p H(x,d_xu_\lb^\eta,\lb u_\lb^\eta),\partial^2_{x\eta}u_\lb^\eta\rangle +\lb \partial_u H\cdot \partial_\eta u_\lb^\eta=2\eta\Dt u_\lb^\eta+(\alpha+\eta^2)\Dt(\partial_\eta u_\lb^\eta).
\]
Consequently, 
\ben
& &\int_{\T^n}2\lb \partial_u H(x,d_xu_\lb^\eta,\lb u_\lb^\eta)\partial_\eta u_\lb^\eta\theta_\lb^\eta dx\\
&=&\int_{\T^n}\lb \partial_u H(x,d_xu_\lb^\eta,\lb u_\lb^\eta)\partial_\eta u_\lb^\eta\theta_\lb^\eta dx+\int_{\T^n} \big(2\eta\Dt u_\lb^\eta+(\alpha+\eta^2)\Dt(\partial_\eta u_\lb^\eta)-\langle \partial_p H,\partial^2_{x\eta}u_\lb^\eta\rangle\big)\theta_\lb^\eta dx 
\een
which can be further transferred into 
\[
2\eta\int_{\T^n} \Dt u_\lb^\eta\theta_\lb^\eta dx+\lb\int_{\T^n} \partial_u H(x,d_xu_\lb^\eta,\lb u_\lb^\eta)\partial_\eta u_\lb^\eta\theta_\lb^\eta dx=2\lb \int_{\T^n} \dt_{x_0}\partial_\eta u_\lb^\eta dx =2\lb \partial_\eta u_\lb^\eta(x_0).
\]
Since $x_0\in \T^n$ is freely chosen, so we can make $|\partial_\eta u_\lb^\eta(x_0)|=\max_{x\in \T^n}|\partial_\eta u_\lb^\eta(x)|$. If so, 
\ben
2\eta\bigg|\int_{\T^n}\Dt u_\lb^\eta\theta_\lb^\eta dx\bigg|&=&\bigg|2\lb \partial_\eta u_\lb^\eta(x_0)-\lb\int_{\T^n} \partial_u H(x,d_xu_\lb^\eta,\lb u_\lb^\eta)\partial_\eta u_\lb^\eta\theta_\lb^\eta dx\bigg|\\
&\geq &2\lb |\partial_\eta u_\lb^\eta(x_0)|-\lb \int_{\T^n} \max_{x\in \T^n}|\partial_\eta u_\lb^\eta(x)|\partial_u H\cdot \theta_\lb^\eta dx\\
&= &\lb |\partial_\eta u_\lb^\eta(x_0)|.
\een
On the other side, 
\ben
2\eta\bigg|\int_{\T^n}\Dt u_\lb^\eta\theta_\lb^\eta dx\bigg|&\leq&2\eta\int_{\T^n}|\Dt u_\lb^\eta|\theta_\lb^\eta dx\\
&\leq&2\eta\sqrt{\int_{\T^n} |D^2u_\lb^\eta|^2\theta_\lb^\eta dx}\cdot\sqrt{\int_{\T^n}\theta_\lb^\eta dx}\\
&=&2\eta\frac{\sqrt{C_8}}{\eta}\cdot \frac{1}{\min_{x\in \T^n}\partial_u H(x,d_xu_\lb^\eta,\lb u_\lb^\eta)}
\een
due to the {\it H\"older's Inequality}. Combining these two conclusions we get 
\[
|\partial_\eta u_\lb^\eta(x)|\leq \frac{2\sqrt{C_8}}{\lb \min_{x\in \T^n}\partial_u H(x,d_xu_\lb^\eta,\lb u_\lb^\eta)},
\]
then integrate both sides with respect to  $\eta\in(0,1]$ we get 
\[
|u_\lb^\eta-u_\lb|_{L^\infty}\leq C'\frac\eta\lb,\quad \forall \;\lb, \eta\in(0,1]
\]
for some constant $C'>0$. 
%
%
%
%
%

\vspace{40pt}

\end{document}